\pgfplotsset{compat=1.18}
\newcommand{\Mod}[1]{\ (\mathrm{mod}\ #1)}
\definecolor{uuuuuu}{rgb}{0.27,0.27,0.27}
\definecolor{sqsqsq}{rgb}{0.1255,0.1255,0.1255}
\newtheorem{definition}{Definition} [section]
\newtheorem{theorem}[definition]{Theorem}
\newtheorem{lemma}[definition]{Lemma}
\newtheorem{proposition}[definition]{Proposition}
\newtheorem{corollary}[definition]{Corollary}
\newtheorem{claim}[definition]{Claim}
\newtheorem{problem}[definition]{Problem}
\newtheorem{fact}[definition]{Fact}
\begin{document}
\title{\bf\Large Tight bounds for rainbow partial $F$-tiling in edge-colored complete hypergraphs}
\date{\today}
\author[1]{Jinghua Deng\thanks{Email: \texttt{Jinghua\_deng@163.com}}}
\author[1]{Jianfeng Hou\thanks{Research was supported by National Key R$\&$D Program of China (Grant No. 2023YFA1010202), National Natural Science Foundation of China (Grant No. 12071077) and the Central Guidance on Local Science and Technology Development Fund of Fujian Province (Grant No. 2023L3003). Email: \texttt{jfhou@fzu.edu.cn}}}
\author[2]{Xizhi Liu\thanks{Research was supported by ERC Advanced Grant 101020255. \texttt{Email: xizhi.liu.ac@gmail.com}}}
\author[1]{Caihong Yang\thanks{Email: \texttt{chyang.fzu@gmail.com}}}
\affil[1]{Center for Discrete Mathematics,
            Fuzhou University, Fujian, 350003, China}
\affil[2]{Mathematics Institute and DIMAP,
            University of Warwick,
            Coventry, CV4 7AL, UK}
\maketitle
\begin{abstract}
    For an $r$-graph $F$ and integers $n,t$ satisfying $t \le n/v(F)$, let $\mathrm{ar}(n,tF)$ denote the minimum integer $N$ such that every edge-coloring of $K_{n}^{r}$ using $N$ colors contains a rainbow copy of $tF$, 
    where $tF$ is the $r$-graphs consisting of $t$ vertex-disjoint copies of $F$. 
    The case $t=1$ is the classical anti-Ramsey problem proposed by Erd\H{o}s--Simonovits--S\'{o}s~\cite{ESS75}. 
    When $F$ is a single edge, this becomes the rainbow matching problem introduced by Schiermeyer~\cite{Sch04} and \"{O}zkahya--Young~\cite{OY13}. 

    We conduct a systematic study of $\mathrm{ar}(n,tF)$ for the case where $t$ is much smaller than $\mathrm{ex}(n,F)/n^{r-1}$. 
    Our first main result provides a reduction of $\mathrm{ar}(n,tF)$ to $\mathrm{ar}(n,2F)$ when $F$ is bounded and smooth, two properties satisfied by most previously studied hypergraphs. 
    Complementing the first result, the second main result, which utilizes gaps between Tur\'{a}n numbers, determines $\mathrm{ar}(n,tF)$ for relatively smaller $t$. 
    Together, these two results determine $\mathrm{ar}(n,tF)$ for a large class of hypergraphs. 
    Additionally, the latter result has the advantage of being applicable to hypergraphs with unknown Tur\'{a}n densities, such as the famous tetrahedron $K_{4}^{3}$. 

\medskip

\textbf{Keywords:} anti-Ramsey problem, Tur\'{a}n problem, hypergraphs, boundedness, smoothness,  stability.
\end{abstract}
\section{Intorduction}\label{SEC:Intorduction}
Given an integer $r\ge 2$, an \textbf{$r$-uniform hypergraph} (henceforth \textbf{$r$-graph}) $\mathcal{H}$ is a collection of $r$-subsets of some finite set $V$.
We identify a hypergraph $\mathcal{H}$ with its edge set and use $V(\mathcal{H})$ to denote its vertex set. 
The size of $V(\mathcal{H})$ is denoted by $v(\mathcal{H})$. 
The $n$-vertex complete $r$-graph is denoted by $K_n^{r}$.
The superscript $r$ will be omitted in the case where $r=2$. 

Given an $r$-graph $F$ and an integer $t \ge 1$, let $t F$ denote the $r$-graph consisting of $t$ vertex-disjoint copies of $F$. 
We say that an edge-colored $tF$ is \textbf{rainbow} if no pair of edges in $tF$ share the same color. 
Motivated by the classical \textbf{anti-Ramsey problem} proposed by Erd\H{o}s--Simonovits--S\'{o}s~\cite{ESS75}, as well as theorems of Schiermeyer~\cite{Sch04} and \"{O}zkahya--Young~\cite{OY13} on rainbow matchings, we aim to initiate a systematic study of the following problem in this work. 
\begin{problem}\label{PROB:antiRmasey-tF}
    Given an $r$-graph $F$ and positive integers $n, t$ satisfying $t \le n/v(F)$, determine the minimum value $N$, denoted by $\mathrm{ar}(n,tF)$, such that every surjective edge-coloring $\chi \colon K_{n}^{r} \to [N]$ contains a rainbow copy of $tF$. 
\end{problem}
Before surveying related results, let us introduce some definitions related to Tur\'{a}n problems. 
Given a family $\mathcal{F}$ of $r$-graphs, we say an $r$-graph $\mathcal{H}$ is \textbf{$\mathcal{F}$-free} if it does not contain any member of $\mathcal{F}$ as a subgraph. 
The \textbf{Tur\'{a}n number} $\mathrm{ex}(n, \mathcal{F})$ of $\mathcal{F}$ is the maximum number of edges in an $\mathcal{F}$-free $r$-graph on $n$ vertices. 
The \textbf{Tur\'{a}n density} of $\mathcal{F}$ is defined as $\pi(\mathcal{F})\coloneq \lim_{n\to\infty}\mathrm{ex}(n,\mathcal{F})/{n\choose r}$. 
A family $\mathcal{F}$ of $r$-graphs is called \textbf{nondegenerate} if $\pi(\mathcal{F})>0$, and \textbf{degenerate} otherwise.
Determining $\mathrm{ex}(n, \mathcal{F})$, even asymptotically, is a central topic in Extremal Combinatorics. 
For most hypergraphs and degenerate graphs, we do not know the asymptotic behavior of $\mathrm{ex}(n, \mathcal{F})$. 
Some notoriously hard problems in this area include the Tetradron Conjecture, proposed by Tur\'{a}n~\cite{TU41} over 80 years ago, which asserts that $\pi(K_{4}^{3}) = 5/9$, and the Even Cycle Problem, proposed by Erd\H{o}s~\cite{E64,BS74}, which asks for the exponent of $\mathrm{ex}(n,C_{2k})$.  
Here, we refer the reader to surveys~\cite{Kee11,FS13} for further results on Tur\'{a}n problems. 

The case $t=1$ in Problem~\ref{PROB:antiRmasey-tF}  corresponds to the classical anti-Ramsey problem introduced by Erd\H{o}s--Simonovits--S\'{o}s in~\cite{ESS75}. 
They established the following general relation between the anti-Ramsey number and the Tur\'{a}n number of an $r$-graph $F$$\colon$
\begin{align*}
    \mathrm{ex}(n, F_-)+2 
    \le \mathrm{ar}(n, F) 
    \le \mathrm{ex}(n, F_-)+o(n^r).
\end{align*}
Here, $F_-$ denotes the family of $r$-graphs obtained from $F$ by removing exactly one edge. 
Determining the precise form of the error term in the upper bound is a central topic in anti-Ramsey theory. 
For complete graphs, Erd\H{o}s--Simonovits--S\'{o}s proved that for fixed $\ell\ge 2$ and sufficiently large $n$,
\begin{align*}
    \mathrm{ar}(n, K_{\ell+1})
    = \mathrm{ex}(n, K_\ell) + 2.
\end{align*}
The restriction on $n$ was later removed by Montellano-Ballesteros and Neumann-Lara in~\cite{MN02}.
Numerous results on graph anti-Ramsey problems have been obtained by various researchers over the past five decades. 
For related results, we refer the reader to the survey by Fujita--Magnant--Ozeki~\cite{FMO10}. 
On the other hand, the understanding of hypergraph anti-Ramsey problems is mostly limited to very sparse and special hypergraphs such as linear paths, linear cycles, and linear trees (see e.g.~\cite{GLS20,TLY22,LS23a}). 
Results on nondegenerate hypergraphs, which extends the Erd\H{o}s--Simonovits--S\'{o}s Theorem on complete graphs, were obtained only very recently~\cite{LS23b,LTY24}. 

For $t \ge 2$, there are only a few results on Problem~\ref{PROB:antiRmasey-tF}. 
A theorem of Jiang--Pikhurko~\cite{JP09} on doubly edge-critical graphs determines $\mathrm{ar}(n,2F)$ when $F$ is an edge-critical graph and $n$ is large, where edge-critical means there exists an edge whose removal will decrease the chromatic number. 
The theorem of Jiang--Pikhurko was extended to hypergraphs very recently~\cite{LS23b,LTY24}, and hence, $\mathrm{ar}(n,2F)$ is also known for a certain class of hypergraphs. 
Schiermeyer~\cite{Sch04} and \"{O}zkahya--Young~\cite{OY13} initiated the study of Problem~\ref{PROB:antiRmasey-tF} for $F = K_{r}^{r}$ with $r=2$ and $r\ge 3$, respectively. 
The case $F = K_{3}$ was studied very recently by Wu--Zhang--Li--Xiao in~\cite{WZLX23}.  
It appears that $K_2$ is the only example, even for large $n$, for which Problem~\ref{PROB:antiRmasey-tF} has been completely solved~\cite{Sch04,FKSS09,CLT09,HY12}. 

In this work, we conduct a systematic study of Problem~\ref{PROB:antiRmasey-tF} for general $r$-graphs $F$, with a focus on the case where $n$ is sufficiently large and $2 \le t \ll \mathrm{ex}(n,F)/n^{r-1}$.  
Our first main result is a reduction theorem (Theorem~\ref{THM:antiRamsey-reduction}), which, for a large class of hypergraphs $F$, reduces the problem of determining $\mathrm{ar}(n,tF)$ to determining $\mathrm{ar}(n,2F)$ when $t \ll \mathrm{ex}(n,F)/n^{r-1}$. 
The second main result (Theorem~\ref{THM:main-antiRmasey-Turan-Gap}) utilizes the gap between Tur\'{a}n numbers $\mathrm{ex}(n,F)$ and $\mathrm{ex}(n,\{F\} \cup F\oplus F)$ (defined in Section~\ref{SEC:Intro-Turan-Gap}) to determine $\mathrm{ar}(n,tF)$ for relatively small $t$. 
This theorem, combined with the first main result, determines $\mathrm{ar}(n,tF)$ for numerous hypergraphs (see Table~\ref{tab:turan-edge-critical.}) when $n$ is large and $2 \le t \ll \mathrm{ex}(n,F)/n^{r-1}$.
In addition, the second main theorem also determines $\mathrm{ar}(n,tF)$ for numerous hypergraphs with unknown Tur\'{a}n densities (see Table~\ref{tab:turan-edge-critical-2.}) when $n$ is large and $2 \le t \ll \sqrt{n}$. This include the famous $K_{4}^{3}$. 

\textbf{Notations.}
For an $r$-graph $\mathcal{H}$ and a vertex $v\in V(\mathcal{H})$, the \textbf{link} of $v$ in $\mathcal{H}$ is 
\begin{align*}
    L_{\mathcal{H}}(v)
    \coloneqq \left\{e \in \binom{V(\mathcal{H})}{r-1} \colon e\cup \{v\} \in \mathcal{H} \right\}. 
\end{align*}
The \textbf{degree} of $v$ in $\mathcal{H}$ is $d_{\mathcal{H}}(v) \coloneqq |L_{\mathcal{H}}(v)|$.
The \textbf{maximum degree} of $\mathcal{H}$ is denoted by $\Delta(\mathcal{H})$. 

For an $r$-graph $\mathcal{H}$ and a vertex set $S\subseteq V(\mathcal{H})$, we use $\mathcal{H}[S]$ to denote the induced subgraph of $\mathcal{H}$ on $S$, 
and use $\mathcal{H}-S$ to denote the induced subgraph of $\mathcal{H}$ on $V(\mathcal{H}) \setminus S$. 

Throughout this paper, asymptotic notations are taken with respect to $n$. 
\subsection{The reduction theorem}
In this subsection, we present the reduction theorem. 
To state the main result we need some definitions from~\cite{HLLYZ23,HHLLYZ23a}. 

Let $F$ be an $r$-graph. For every $n\in\mathbb N$, let
\begin{align*}
    \delta(n,F) 
    \coloneq \mathrm{ex}(n,F)-\mathrm{ex}(n-1,F)
    \quad\text{and}\quad 
    d(n,F)
    \coloneq \frac{r\cdot \mathrm{ex}(n,F)}{n}.
\end{align*}
\begin{definition}
    Let $r \ge 2$ be an integer and $F$ be an $r$-graph. 
    \begin{itemize}
        \item For two real numbers $c_1, c_2 > 0$, we say $F$ is \textbf{$(c_1, c_2)$-bounded} if there exists $N_0$ such that every $r$-graph $\mathcal{H}$ on $n\ge N_0$ vertices with
          \begin{align*}
              \Delta(\mathcal{H}) 
              \ge d(n,F) + c_1\binom{n-1}{r-1} 
              \quad\text{and}\quad
              |\mathcal{H}| 
              \ge (1-c_2) \cdot \mathrm{ex}(n,F)  
          \end{align*}
          contains a copy of $F$.  
          We say $F$ is \textbf{bounded} if it is $(c_1, c_2)$-bounded for some constants $c_1, c_2 > 0$.
        \item We say $F$ is \textbf{smooth} if it is degenerate\footnote{Since our results do not require~\eqref{equ:def-smooth} for degenerate $r$-graphs, we consider every degenerate $r$-graph as smooth by default for convenience.} or there exists $N_0$ such that 
            \begin{align}\label{equ:def-smooth}
                |\delta(n,F)-d(n-1,F)|\le \frac{1-\pi(F)}{8 v(F)}\binom{n}{r-1} 
                \quad\text{for all $n \ge N_0$}. 
             \end{align}
            \end{itemize}
\end{definition}
Boundedness and smoothness, introduced in~\cite{HLLYZ23,HHLLYZ23a}, are crucial properties for determining $\mathrm{ex}(n,tF)$, the maximum number of edges in an $n$-vertex $r$-graph without $tF$. 
It was shown in~\cite{HLLYZ23} and ~\cite{HHLLYZ23c} that many well-studied nondegenerate and degenerate $r$-graphs (see Table~\ref{tab:turan-edge-critical.}) are bounded and smooth, respectively. 

The following theorem presents an application of boundedness and smoothness in anti-Ramsey theory.  

\begin{theorem}\label{THM:antiRamsey-reduction}
    Let $r \ge 2$ be an integer. 
    Suppose that $F$ is an $r$-graph that is smooth and $(c_1,c_2)$-bounded for constants $c_1,c_2$ satisfying $0 < c_1 \le \frac{1-\pi(F)}{12m}$ and $c_2>0$. 
    Then there exists $\delta >0$ and $N_0$ such that for every $n \ge N_0$ and for every $t \in \left[0, \frac{\delta\cdot \mathrm{ex}(n,F)}{n^{r-1}}\right]$, 
    \begin{align*}
        \mathrm{ar}(n,(t+2)F)
        = \binom{n}{r}-\binom{n-t}{r} + \mathrm{ar}(n-t,2F). 
    \end{align*}
\end{theorem}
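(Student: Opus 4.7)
The plan is as follows. For the lower bound $\mathrm{ar}(n,(t+2)F)\ge \binom{n}{r}-\binom{n-t}{r}+\mathrm{ar}(n-t,2F)$, I would use the natural construction: fix $S\subseteq V(K_n^{r})$ with $|S|=t$, assign each of the $\binom{n}{r}-\binom{n-t}{r}$ edges meeting $S$ its own private color, and on the remaining $K_{n-t}^{r}$ take an optimal rainbow-$2F$-free coloring using $\mathrm{ar}(n-t,2F)-1$ colors. Any would-be rainbow $(t+2)F$ has at most $t$ of its copies meeting $S$, forcing at least two copies entirely inside $V\setminus S$ and contradicting the inner coloring.

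For the upper bound, I would argue by induction on $t$ via the single-step estimate
\[
\mathrm{ar}(n,(t+2)F)\;\le\;\binom{n-1}{r-1}+\mathrm{ar}(n-1,(t+1)F)\qquad (t\ge 1),
\]
which telescopes to the claimed identity. Fix a coloring $\chi\colon K_n^{r}\to[N]$ with $N=\binom{n-1}{r-1}+\mathrm{ar}(n-1,(t+1)F)$ and a representing subgraph $\mathcal{H}$ with $|\mathcal{H}|=N$ (one edge per color, chosen to avoid a vertex $v$ wherever possible). Then $\chi|_{V\setminus\{v\}}$ uses at least $N-\binom{n-1}{r-1}=\mathrm{ar}(n-1,(t+1)F)$ colors, so by definition of $\mathrm{ar}$ there is a rainbow $(t+1)F$ on $V\setminus\{v\}$. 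The remaining task is to extend by one more rainbow copy of $F$ through $v$ that is vertex- and color-disjoint from this $(t+1)F$.

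To make the extension step work I would select $v$ strategically as a near-apex vertex of $\mathcal{H}$. Since $|\mathcal{H}|-\mathrm{ex}(n,(t+1)F)=\mathrm{ar}(n-t,2F)-\mathrm{ex}(n-t,F)>0$ for bounded, smooth $F$, and since the extremal $(t+1)F$-free $r$-graph has the shape ``$t$ apex vertices plus an $\mathrm{ex}(n-t,F)$-extremal $F$-free piece,'' a stability argument powered by the smoothness estimate on $\delta(n,F)$ and the constraint $c_1\le(1-\pi(F))/(12m)$ forces $\mathcal{H}$ to contain at least one vertex $v$ with $d_\mathcal{H}(v)\ge(1-o(1))\binom{n-1}{r-1}$. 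Applying $(c_1,c_2)$-boundedness inside the link of $v$ then yields a rainbow copy of $F$ through $v$ in $\mathcal{H}$, and iterating (delete the copy, reapply boundedness, using smoothness to keep the degree threshold valid at $v$) supersaturates to a large family of rainbow $F$'s through $v$. Since $t\le\delta\cdot\mathrm{ex}(n,F)/n^{r-1}$, the $(t+1)v(F)$ forbidden vertices and $(t+1)e(F)$ forbidden colors coming from the inductive $(t+1)F$ block only a small fraction of these copies, leaving a usable $F$ through $v$ that completes the rainbow $(t+2)F$.

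The main obstacle I expect is the iterative supersaturation step: the boundedness hypothesis as stated produces only a single copy of $F$, and one must preserve the maximum-degree threshold $\Delta\ge d(n',F)+c_1\binom{n'-1}{r-1}$ across the iteration. This is exactly where the smoothness bound $|\delta(n,F)-d(n-1,F)|\le\tfrac{1-\pi(F)}{8v(F)}\binom{n}{r-1}$ and the sharp constraint $c_1\le(1-\pi(F))/(12m)$ work in tandem: smoothness ensures the threshold does not drift unexpectedly after deleting a few vertices and edges, while the boundedness constant is small enough to leave slack for the cumulative losses. Carrying this iteration through rigorously, and verifying that the stability-based identification of $v$ survives the loosened extremal gap of order $\mathrm{ar}(n-t,2F)-\mathrm{ex}(n-t,F)$, is the technical heart of the proof.
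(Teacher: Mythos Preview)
Your lower bound construction is exactly the paper's (Fact~4.1). Your inductive organisation of the upper bound --- prove the single-step inequality $\mathrm{ar}(n,(t+2)F)\le\binom{n-1}{r-1}+\mathrm{ar}(n-1,(t+1)F)$ and telescope --- is a legitimate alternative to what the paper does. The paper instead argues directly: it invokes a stability theorem (Theorem~4.2) to find $t$ high-degree vertices $L$ at once, observes that $K_n^r[V\setminus L]$ still carries at least $\mathrm{ar}(n-t,2F)$ colors and hence a rainbow $2F$, deletes the $2|F|$ colors of that $2F$ from the representing subgraph $\mathcal{H}$, and then builds $t$ vertex-disjoint copies of $F$ one at a time using a boundedness lemma (Lemma~4.3). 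Both routes rest on the same two ingredients: a stability result producing high-degree vertices, and a single-shot lemma saying a high-degree vertex forces a copy of $F$ avoiding any prescribed small set $R$.

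Your description of the extension step, however, contains concrete errors that would derail an attempted write-up. First, the stability argument does \emph{not} produce a vertex of degree $(1-o(1))\binom{n-1}{r-1}$; it only yields (at least $t$) vertices of degree $\ge d(n,F)+\alpha\binom{n-1}{r-1}$ for a small constant $\alpha\approx\frac{1-\pi(F)}{7m}$ --- roughly $(\pi(F)+\alpha)\binom{n-1}{r-1}$, which for nondegenerate $F$ is far from full. Fortunately this weaker bound is all that is needed. Second, your ``iterative supersaturation'' picture is both unnecessary and wrong as stated: boundedness is not applied ``inside the link of $v$,'' and the resulting copy of $F$ is not guaranteed to contain $v$. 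What actually works is a single application of Lemma~4.3 to $\mathcal{H}'$ ($\mathcal{H}$ with the $(t+1)|F|$ forbidden-color edges removed), with $R$ equal to the $(t+1)m$ vertices of the already-found rainbow $(t+1)F$; the lemma outputs one copy of $F$ in $\mathcal{H}'$ avoiding $R$, automatically vertex- and color-disjoint from the rest, and you are done --- no iteration, no supersaturation. Finally, smoothness is not used to ``keep the degree threshold valid'' during any iteration; its role is entirely inside the proof of the stability theorem (comparing $\mathrm{ex}(\cdot,F)$ at nearby arguments). If you rewrite the extension step along these lines, your inductive scheme goes through and is equivalent in strength to the paper's direct proof.
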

Proof of Theorem~\ref{THM:antiRamsey-reduction} is presented in Section~\ref{SEC:proof-reduction}. 

\subsection{Anti-Ramsey results from gaps between Tur\'{a}n numbers}\label{SEC:Intro-Turan-Gap}
%
Complementing the theorem from the previous subsection, we present a theorem that addresses $\mathrm{ar}(n,tF)$ when $t$ is small, particularly for $t=2$, in this subsection. 

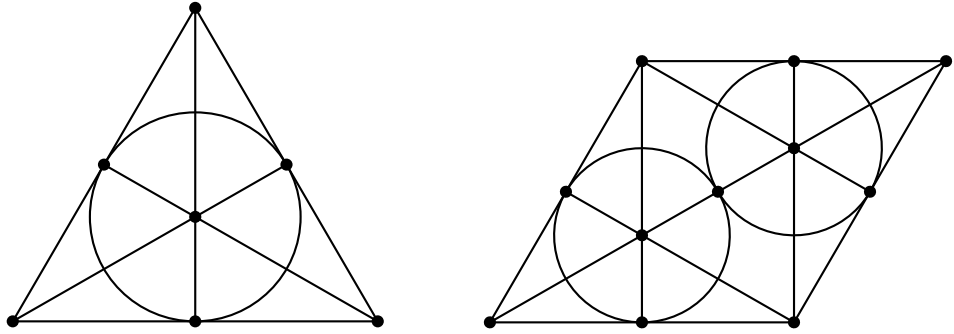
\begin{figure}[htbp]
\centering
\begin{subfigure}[c]{0.45\textwidth}
\centering
\begin{tikzpicture}[xscale=2.4,yscale=2.4]
\node (a) at (-1,0) {};
\fill (a) circle (0.0333);
\node (b) at (0,0) {};
\fill (b) circle (0.0333);
\node (c) at (1,0) {};
\fill (c) circle (0.0333);
\node (d) at (0.5,0.866025) {};
\fill (d) circle (0.0333);
\node (e) at (0,1.732051) {};
\fill (e) circle (0.0333);
\node (f) at (-0.5,0.866025) {};
\fill (f) circle (0.0333);
\node (g) at (0,0.577350) {};
\fill (g) circle (0.0333);
\node (h) at (0,-0.3) {};
\draw[line width=0.8pt] (0,0.577350) circle [radius = 0.577350];
\draw[line width=0.8pt] (-1,0) -- (1,0);
\draw[line width=0.8pt] (-1,0) -- (0,1.732051);
\draw[line width=0.8pt] (-1,0) -- (0.5,0.866025);
\draw[line width=0.8pt] (1,0) -- (0,1.732051);
\draw[line width=0.8pt] (1,0) -- (-0.5,0.866025);
\draw[line width=0.8pt] (0,0) -- (0,1.732051);
\end{tikzpicture}
\end{subfigure}
\begin{subfigure}[c]{0.45\textwidth}
\centering
\begin{tikzpicture}[xscale=2,yscale=2,rotate=120]
\node (a) at (-1,0) {};
\fill (a) circle (0.04);
\node (b) at (0,0) {};
\fill (b) circle (0.04);
\node (c) at (1,0) {};
\fill (c) circle (0.04);
\node (d) at (0.5,0.866025) {};
\fill (d) circle (0.04);
\node (e) at (0,1.732051) {};
\fill (e) circle (0.04);
\node (f) at (-0.5,0.866025) {};
\fill (f) circle (0.04);
\node (g) at (0,0.577350) {};
\fill (g) circle (0.04);
\node (h) at (0,-0.3) {};
\draw[line width=0.8pt] (0,0.577350) circle [radius = 0.577350];
\draw[line width=0.8pt] (-1,0) -- (0,1.732051);
\draw[line width=0.8pt] (-1,0) -- (0.5,0.866025);
\draw[line width=0.8pt] (1,0) -- (0,1.732051);
\draw[line width=0.8pt] (1,0) -- (-0.5,0.866025);
\draw[line width=0.8pt] (0,0) -- (0,1.732051);
\draw[line width=0.8pt] (0,0) -- (0,-1.732051);
\draw[line width=0.8pt] (-1,0) -- (0,-1.732051);
\draw[line width=0.8pt] (1,0) -- (0,-1.732051);
\draw[line width=0.8pt] (1,0) -- (-0.5,-0.866025);
\draw[line width=0.8pt] (-1,0) -- (0.5,-0.866025);
\draw[line width=0.8pt] (0,-0.577350) circle [radius = 0.577350];
\node (d) at (0.5,-0.866025) {};
\fill (d) circle (0.04);
\node (e) at (0,-1.732051) {};
\fill (e) circle (0.04);
\node (f) at (-0.5,-0.866025) {};
\fill (f) circle (0.04);
\node (g) at (0,-0.577350) {};
\fill (g) circle (0.04);
\end{tikzpicture}
\end{subfigure}
\caption{Fano plane $\mathbb{F}$ (left) and $\mathbb{F}\oplus\mathbb{F}$ (right). Here $\mathbb{F} \coloneqq \{123,345,561,174,275,376,246\}$.}
\label{Fig:Fano-plane}
\end{figure}

Given an $r$-graph $F$ and an edge $e \in F$, let $F_{e-}$ denote the $r$-graph obtained from $F$ by removing the edge $e$. 
To be specific, $F_{e-} \coloneqq F\setminus \{e\}$. 
Recall that $F_- = \{F_{e-} \colon e\in F\}$. 
Let $F, F'$ be two $r$-graphs, $(e,e') \in F\times F'$ be a pair of edges, and $\phi \colon e \to e'$ be a bijection. 
Following the definition in~\cite{Wag37,KT90}, the \textbf{edge-sum} $F \oplus_{\phi} F'$ is the $r$-graph obtained from $F_{e-}$ and $F'_{e'-}$ by identifying $v$ with $\phi(v)$ for all $v\in e$. 
For convenience, let 
\begin{align*}
    F \oplus F'
    \coloneqq 
    \left\{F \oplus_{\phi} F' \colon \text{$\phi \colon e \to e'$ is bijective for some pair $(e,e') \in F\times F'$}\right\} 
\end{align*}
denote the collection of all edge-sums of $F$ and $F'$ (see Figure~\ref{Fig:Fano-plane}). 
A quick observation is that $C_{k} \oplus C_{\ell} = \{C_{k+\ell-2}\}$ for all $k, \ell \ge 3$. 

For convenience, we slightly abuse notation by letting $\{F\}\cup F\oplus F \coloneqq \{F\}\cup \left(F\oplus F\right)$. 
Note that $\mathrm{ex}(n,F) - \mathrm{ex}\left(n, \{F\}\cup F\oplus F \right) \ge 0$ for every $F$ and for every $n \in \mathbb{N}$. 
\begin{definition}\label{DDF:turan-edge-critical}
    An $r$-graph $F$ is \textbf{edge-sensitive} if there exists $N_0$ such that 
    \begin{align*}
        \mathrm{ex}(n,F) - \mathrm{ex}\left(n, \{F\}\cup F\oplus F \right)
        \ge 2 v(F) |F| \binom{n-1}{r-1}
        \quad\text{for all}\quad n \ge N_0. 
    \end{align*}
\end{definition}
%
The definition of edge-sensitive is motivated by that of edge-critical.  
It follows easily from the classical Erd\H{o}s--Stone--Simonovits Theorem~\cite{ES66} that every edge-critical graph is edge-sensitive. 
However, as shown by the even cycle $C_{2k}$, the converse does not hold (see Proposition~\ref{PROP:turan-edge-critical}~\ref{PROP:turan-edge-critical-2}). 

The following theorem provides an application of the gap between $\mathrm{ex}(n,F)$ and $\mathrm{ex}(n, \{F\} \cup F\oplus F)$ in anti-Ramsey theory. 
\begin{theorem}\label{THM:main-antiRmasey-Turan-Gap}
    Let $r \ge 2$ be an integer and $F$ be an $r$-graph. 
    For every $n \in \mathbb{N}$, 
    \begin{align*}
        \mathrm{ar}(n,(t+1)F) = \mathrm{ex}(n,tF) + 2
        \quad\text{for}\quad 
        1 \le t \le \sqrt{\frac{\mathrm{ex}(n,F) - \mathrm{ex}\left(n, \{F\}\cup F\oplus F\right)}{2 v(F) |F| \binom{n-1}{r-1}}}
    \end{align*}
    In particular, if $F$ is edge-sensitive, then for sufficiently large $n$, 
    \begin{align*}
        \mathrm{ar}(n,2F) = \mathrm{ex}(n,F) + 2.
    \end{align*}
\end{theorem}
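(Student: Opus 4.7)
I plan to prove $\mathrm{ar}(n,(t+1)F)\ge \mathrm{ex}(n,tF)+2$ by the standard extremal construction: take $\mathcal{H}$ to be a $tF$-free $r$-graph on $n$ vertices with $\mathrm{ex}(n,tF)$ edges, assign each edge of $\mathcal{H}$ a unique color, and color every edge of $K_n^r\setminus\mathcal{H}$ by one additional color. A putative rainbow $(t+1)F$ uses the additional color at most once, so at least $(t+1)|F|-1$ of its edges lie in $\mathcal{H}$; but $(t+1)F$ minus any single edge still contains $tF$ as a subgraph, contradicting the choice of $\mathcal{H}$.

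\textbf{Upper bound: iterative construction.} Let $\chi\colon K_n^r\to [N]$ be surjective with $N=\mathrm{ex}(n,tF)+2$. I will produce pairwise vertex-disjoint rainbow copies $\mathcal{F}_1,\ldots,\mathcal{F}_{t+1}$ of $F$ with pairwise disjoint color palettes by induction on $s$. Suppose $\mathcal{F}_1,\ldots,\mathcal{F}_s$ have been built for some $s\le t$, and set $W=\bigcup_{i\le s}V(\mathcal{F}_i)$ and $C=\bigcup_{i\le s}\chi(\mathcal{F}_i)$, so $|W|=sv(F)$ and $|C|=s|F|$. Let $\mathcal{R}_s$ be a representative subhypergraph of $K_n^r[[n]\setminus W]$ obtained by selecting exactly one edge per color in $[N]\setminus C$ that appears on some edge of $K_n^r[[n]\setminus W]$. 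Since $|C|=s|F|$ colors are excluded by $C$ and at most $\binom{n}{r}-\binom{n-sv(F)}{r}\le sv(F)\binom{n-1}{r-1}$ colors have all their edges touching $W$,
\[
|\mathcal{R}_s|\;\ge\;N-s|F|-sv(F)\binom{n-1}{r-1}.
\]

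\textbf{Extension via the edge-sum structure.} If $\mathcal{R}_s$ contains a copy of $F$, that copy is rainbow, vertex-disjoint from $W$, and uses colors outside $C$, so it serves as $\mathcal{F}_{s+1}$. Otherwise, combining the lower bound on $|\mathcal{R}_s|$ with $\mathrm{ex}(n,tF)\ge \mathrm{ex}(n,F)$ and the hypothesis $2t^2 v(F)|F|\binom{n-1}{r-1}\le \mathrm{ex}(n,F)-\mathrm{ex}(n,\{F\}\cup F\oplus F)$ yields, after a short calculation, $|\mathcal{R}_s|>\mathrm{ex}(n-sv(F),\{F\}\cup F\oplus F)$, so $\mathcal{R}_s$ contains a copy of some member of $F\oplus F$. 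This copy consists of two disjoint $F_-$-pieces glued along an $r$-vertex set $e^*\subseteq [n]\setminus W$; setting $c^*:=\chi(e^*)$, the rainbowness of $\mathcal{R}_s$ ensures that the two $F_-$-pieces have disjoint color sets in $\mathcal{R}_s$, so $c^*$ agrees with the color of an edge in at most one of them. Completing the other piece by $e^*$ produces a rainbow copy of $F$ on $[n]\setminus W$, which is our $\mathcal{F}_{s+1}$ whenever $c^*\notin C$.

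\textbf{Main obstacle.} The main technical obstacle is the residual case in which $c^*\in C$ for every $F\oplus F$ one locates in $\mathcal{R}_s$. I plan to resolve it by a swap argument: write $c^*=\chi(f)$ for some edge $f\in\mathcal{F}_k$, swap $\mathcal{F}_k$ for the newly produced rainbow $F$-completion (which also uses color $c^*$), thereby preserving $|W|$ and $|C|$ while relocating $V(\mathcal{F}_k)$ into the complement of $W$ and releasing the $|F|-1$ colors $\chi(\mathcal{F}_k)\setminus\{c^*\}$ for later use, and re-run the extension on the updated configuration. A monotone potential function (e.g., a lexicographic rank of the pair $(W,C)$ in some fixed ordering) certifies that the swap process terminates in a bounded number of rounds. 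The quadratic slack in the hypothesis $t\le\sqrt{(\mathrm{ex}(n,F)-\mathrm{ex}(n,\{F\}\cup F\oplus F))/(2v(F)|F|\binom{n-1}{r-1})}$---as opposed to a merely linear bound---is precisely what supplies enough room for this bookkeeping to close, so that the iteration reaches $s=t+1$ and $\bigcup\mathcal{F}_i$ is the desired rainbow $(t+1)F$. The corollary for edge-sensitive $F$ then follows immediately by taking $t=1$.
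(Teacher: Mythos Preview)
Your lower bound is correct and coincides with the paper's (Fact~\ref{FACT:antiRamsey-tF-general-bound}).

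For the upper bound, your iterative-plus-swap scheme diverges from the paper's argument, and the swap step is where it breaks down. After you replace $\mathcal{F}_k$ by the new completion, both $|W|$ and $|C|$ are unchanged, and you have supplied no monotone invariant: ``lexicographic rank of $(W,C)$'' is not a potential function, and nothing you wrote rules out cycling among configurations. Indeed, the next $F\oplus F$ you locate may again have its missing-edge color inside the new $C$, triggering another swap, and so on. Note also that each of your iterative steps consumes only $O\bigl(s\cdot v(F)\bigr)$ vertices and $O\bigl(s\cdot|F|\bigr)$ colors---linear slack---so your claim that the \emph{quadratic} hypothesis is ``precisely what supplies enough room for this bookkeeping to close'' is left unexplained.

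The paper avoids iteration altogether. It fixes a maximum rainbow subgraph $\mathcal{H}\subseteq K_n^r$ with $|\mathcal{H}|=\mathrm{ex}(n,tF)+2$, finds one copy $\Gamma$ of $tF$ in $\mathcal{H}$, and then, for \emph{every} edge $e\in\Gamma$, finds a backup copy $\Gamma_e$ of $tF$ inside $\mathcal{H}\setminus\{e\}$ (this exists because $|\mathcal{H}|-1>\mathrm{ex}(n,tF)$). Setting $B=V(\Gamma)\cup\bigcup_{e\in\Gamma}V(\Gamma_e)$ gives $|B|\le(1+t|F|)\cdot t\,v(F)\le 2t^2\,v(F)\,|F|$, and \emph{this} is exactly where the quadratic slack is spent. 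One then locates a member of $F\oplus F$ in $\mathcal{G}:=\mathcal{H}-B$, completes one of its two $F_-$-pieces to a rainbow $F^1$ via the missing edge $e$ of color $c^*$, and finishes in a single shot: if $c^*\notin\chi(\Gamma)$, then $\Gamma\sqcup F^1$ is a rainbow $(t+1)F$; if $c^*=\chi(f)$ for some $f\in\Gamma$, then $\Gamma_f\sqcup F^1$ works, because $\Gamma_f\subseteq\mathcal{H}\setminus\{f\}$ omits the unique $\mathcal{H}$-edge of color $c^*$. Pre-computing all the backups $\Gamma_e$ is the key idea you are missing; it replaces your unbounded swap loop by a one-line case split.
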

Proof of Theorem~\ref{THM:main-antiRmasey-Turan-Gap} is presented in Section~\ref{SEC:proof-Turan-gap}. 

\subsection{Applications}\label{SEC:Intro-application}
In this subsection, we present some applications of Theorems~\ref{THM:antiRamsey-reduction} and \ref{THM:main-antiRmasey-Turan-Gap}. 
Let us first present a simple result concerning Tur\'{a}n numbers. 
\begin{proposition}\label{PROP:turan-edge-critical}
    The following statements hold. 
    \begin{enumerate}[label=(\roman*)]
        \item \label{PROP:turan-edge-critical-1} For every $r$-graph $F$, if $\pi(F_{-}) < \pi(F)$, then $F$ is edge-sensitive. 
        \item\label{PROP:turan-edge-critical-2} For every integer $k \ge 2$, the even cycle $C_{2k}$ is edge-sensitive. 
          \item\label{PROP:turan-edge-critical-3} For the Fano Plane $\mathbb{F}$, we have $\pi(\mathbb{F}_{-}) < \pi(\mathbb{F})$.
    \end{enumerate}
\end{proposition}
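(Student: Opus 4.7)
The three parts call for different tools: a supersaturation/intersecting-family argument for (i), a Tur\'an gap analysis for (ii), and Fano plane stability for (iii). The plan is to treat each part separately.

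For part (i), let $\mathcal{H}$ be an $n$-vertex $\{F\} \cup F \oplus F$-free $r$-graph, and consider two cases. If $|\mathcal{H}| \le \mathrm{ex}(n, F_-)$, then since $\pi(F_-) < \pi(F)$ forces $\mathrm{ex}(n, F) - \mathrm{ex}(n, F_-) = \Omega(n^r)$, the required bound $|\mathcal{H}| \le \mathrm{ex}(n, F) - 2v(F)|F|\binom{n-1}{r-1}$ follows trivially. Otherwise $|\mathcal{H}|$ exceeds $\mathrm{ex}(n, F_-)$ by $\Omega(n^r)$, and standard supersaturation forces $\mathcal{H}$ to contain $\Omega(n^{v(F)})$ copies of members of $F_-$. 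Each such copy has an associated ``phantom'' $r$-set---the vertex set where the removed edge would sit to complete a copy of $F$. If two distinct copies share the same phantom $f$ and have disjoint vertex sets outside $f$, their union realizes a member of $F \oplus F$ in $\mathcal{H}$, contradicting the assumption. Hence for each phantom, the family of $(v(F)-r)$-vertex ``outside'' sets is intersecting, so by Erd\H{o}s--Ko--Rado it has size $O(n^{v(F)-r-1})$; summing over $\binom{n}{r}$ phantoms yields only $O(n^{v(F)-1})$ total copies, contradicting the supersaturation lower bound.

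For part (ii), the noted identity $C_{2k} \oplus C_{2k} = \{C_{4k-2}\}$ reduces the edge-sensitivity requirement to establishing the linear additive gap $\mathrm{ex}(n, C_{2k}) - \mathrm{ex}(n, \{C_{2k}, C_{4k-2}\}) \ge 8k^2(n-1)$ for large $n$. The plan is to take a $\{C_{2k}, C_{4k-2}\}$-free graph $H$ with $|E(H)|$ within $O(n)$ of $\mathrm{ex}(n, C_{2k})$, and then exhibit $\Omega(n)$ non-edges $uv$ that are ``safe-but-active'': there is a $(4k-3)$-path between $u$ and $v$ in $H$ (so adding $uv$ would create a $C_{4k-2}$) but no $(2k-1)$-path (so adding $uv$ does not create a $C_{2k}$). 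Adding all such non-edges produces a $C_{2k}$-free graph whose edge count is bounded by $\mathrm{ex}(n, C_{2k})$, furnishing the required linear gap. The existence of $\Omega(n)$ such non-edges should follow from Bondy--Simonovits-type path counting combined with the structure of near-extremal $C_{2k}$-free graphs.

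For part (iii), I first exhibit an explicit proper 2-coloring of $\mathbb{F}_-$: removing the edge $\{3,7,6\}$ from $\mathbb{F}$ leaves the bipartition $\{1,2,5\} \mid \{3,4,6,7\}$ with no monochromatic edge, and by edge-transitivity of $\mathbb{F}$ the same holds for any edge removal. Consequently, the balanced bipartite complete 3-graph $\mathcal{B}_n$ on $n$ vertices, which is $\mathbb{F}$-free with density $3/4$, contains many copies of $\mathbb{F}_-$. By the Keevash--Sudakov stability theorem for the Fano plane, every $\mathbb{F}$-free 3-graph with density close to $3/4$ is close in edit distance to $\mathcal{B}_n$, and supersaturation then forces it to contain $\mathbb{F}_-$. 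Hence no $\mathbb{F}_-$-free 3-graph can have density close to $3/4$, so $\pi(\mathbb{F}_-) < 3/4 = \pi(\mathbb{F})$.

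The main obstacle is part (ii): securing a precise linear additive gap between two Tur\'an numbers of the same asymptotic order $\Theta(n^{1+1/k})$, uniformly in $k$---including for values of $k$ where even the leading constant of $\mathrm{ex}(n, C_{2k})$ is unknown. This will require a careful analysis of paths and cycles in near-extremal $C_{2k}$-free graphs rather than a reduction to any off-the-shelf asymptotic bound.
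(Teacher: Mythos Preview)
Your treatments of parts~(i) and~(iii) are essentially sound and close in spirit to the paper's own arguments. For~(i), the paper gives a one-line proof: any $r$-graph containing the $2$-blowup $F_{e-}[2]$ contains a member of $F\oplus F$ (pick two copies of $F_{e-}$ in the blowup that agree on the blown-up copies of the vertices of $e$ and disagree elsewhere), hence $\pi(\{F\}\cup F\oplus F)\le\pi(F_-[2])=\pi(F_-)<\pi(F)$. Your supersaturation/intersecting-family argument reaches the same inequality $\pi(F\oplus F)\le\pi(F_-)$ by a longer route; it works, but the blowup observation is quicker. For~(iii), the paper uses the same F\"uredi--Simonovits/Keevash--Sudakov stability as you do, but instead of invoking ``supersaturation'' abstractly it explicitly finds three vertices $u_1,u_2,u_3\in V_1$ whose links into $V_2$ are nearly complete, then applies Tur\'an's theorem to locate a $K_4$ in the common link, yielding $\mathbb{F}_-$ directly. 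Your counting argument can be made rigorous along the lines you sketch, but the explicit construction is cleaner.

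The genuine problem is part~(ii). You have misidentified the orders of magnitude: the two Tur\'an numbers involved are \emph{not} of the same order $\Theta(n^{1+1/k})$. Since $C_{2k}\oplus C_{2k}=\{C_{4k-2}\}$, one has
\[
\mathrm{ex}\bigl(n,\{C_{2k}\}\cup C_{2k}\oplus C_{2k}\bigr)\le \mathrm{ex}(n,C_{4k-2})=O\bigl(n^{1+1/(2k-1)}\bigr)
\]
by Bondy--Simonovits, while the Lazebnik--Ustimenko--Woldar construction gives $\mathrm{ex}(n,C_{2k})=\Omega\bigl(n^{1+2/(3k-2)}\bigr)$. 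Since $\tfrac{2}{3k-2}>\tfrac{1}{2k-1}$ for all $k\ge 2$, the gap
\[
\mathrm{ex}(n,C_{2k})-\mathrm{ex}\bigl(n,\{C_{2k},C_{4k-2}\}\bigr)=\Omega\bigl(n^{1+2/(3k-2)}\bigr)
\]
is already polynomially super-linear, and the required bound $\ge 8k^2(n-1)$ is immediate for large $n$. There is no need for any path-counting in near-extremal $C_{2k}$-free graphs, and your proposed ``safe-but-active'' non-edge argument---besides being vague---is aimed at a difficulty that does not exist. This is exactly how the paper proves~(ii): it simply cites these two known bounds.
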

In the following table, we summarize hypergraphs that exhibit boundedness/smoothness and are edge-sensitive. 
Proofs for the boundedness/smoothness can be found in either~\cite{HLLYZ23} (for nondegenerate hypergraphs) or~\cite{HHLLYZ23c} (for degenerate hypergraphs). 
Definitions for these hypergraphs are included in the Appendix. 
\begin{table}[h]
    \centering
    \begin{tabular}{c|c}
        \hline
        Hypergraphs & References \\
        \hline
        Even cycle $C_{2k}$ for $k \ge 2$ & ~\cite{BS74,LUW95} \\ 
        Edge-critical graphs & ~\cite{AES74,ES73}  \\
        Expansion of edge-critical graphs & ~\cite{MU06,PI13}  \\
        Expansion of extended Erd\H{o}s--S\'{o}s tree & ~\cite{Sido89,NY18,BIJ17}  \\
        Expansion of $M_{2}^r$ for $r \ge 3$ & ~\cite{HK13,BNY19}  \\
        Expansion of $M_{k}^3$, $L_{k}^3$, or $L_{k}^4$ for $k \ge 2$ & ~\cite{HK13,JPW18} \\
        Expansion of $M_{k}^{4}$ for $k \ge 2$ & ~\cite{YP23}  \\
        Expansion of $K_{4}^{3} \sqcup K_{3}^{3}$ & ~\cite{YP22}  \\
        Generalized triangle $\mathbb{T}_r$ for $3 \le r \le 6$ & ~\cite{BO74,FF83,Sido87,FF89,PI08}\\
        Expanded triangle $\mathcal{C}_{3}^{2r}$ for $r \ge 2$ & ~\cite{Frankl90,KS05a} \\
        $\mathbb{F}_{3,2}$ ($3$-book with $3$ pages) & ~\cite{FPS053Book3page}  \\
        $\mathrm{F}_7$ ($4$-book with $3$ pages) & ~\cite{FPS06Book} \\
        $\mathbb{F}_{4,3}$ ($4$-book with $4$ pages) & ~\cite{FMP084Book4page} \\
         Fano Plane $\mathbb{F}$ & ~\cite{DF00,KS05,FS05}  \\
        \hline
    \end{tabular}
    \caption{Edge-sensitive hypergraphs with smoothness and boundedness.}
    \label{tab:turan-edge-critical.}
\end{table}

In the following table, we summerize edge-sensitive hypergraphs for which the boundedness and smoothness are unknown. 
\begin{table}[h]
    \centering
    \begin{tabular}{c|c}
        \hline
        Hypergraphs & References \\
        \hline
        $K_{4}^{3-}$ & ~\cite{FF84,BT11} \\
        $K_{4}^{3}$ & ~\cite{TU41,RA10} \\
        $K_{5}^{4}$ &  ~\cite{Gir90,Mar09} \\
        Tight cycle $C_{3\ell+1}^{3}$ and $C_{3\ell+2}^{3}$ for $\ell \ge 1$  & ~\cite{RA10,MPS11,FV12,KLP22} \\
        $C_{3\ell+1}^{3-}$ and $C_{3\ell+2}^{3-}$ for $\ell \ge 1$ & ~\cite{MPS11,BL24} \\
        Generalized triangle $\mathbb{T}_{r}$ for $r \ge 7$  & ~\cite{BO74,FF89} \\
        \hline
    \end{tabular}
    \caption{Edge-sensitive hypergraphs whose boundedness/smoothness are unknown.}
    \label{tab:turan-edge-critical-2.}
\end{table}

Having completed these preparations, we can now present the corollary of Theorems~\ref{THM:antiRamsey-reduction} and~\ref{THM:main-antiRmasey-Turan-Gap}. 
\begin{corollary}\label{CORO:anti-Ramsey-tF}
    Let $r \ge 2$ be an integer and $F$ be an $r$-graph. There exist $\delta>0$ and $N_0$ such that the following statements hold for all $n \ge N_0$. 
    \begin{enumerate}[label=(\roman*)]
        \item\label{CORO:anti-Ramsey-tF-1} If $F$ is contained in Table~\ref{tab:turan-edge-critical.}, then 
        \begin{align*}
            \mathrm{ar}(n,(t+2)F)
            = \binom{n}{r} - \binom{n-t}{r} + \mathrm{ex}(n,F) + 2
            \quad\text{for every}\quad 
            0 \le t \le \frac{\delta \cdot \mathrm{ex}(n,F)}{n^{r-1}}.
        \end{align*}
        \item\label{CORO:anti-Ramsey-tF-2} If $F$ is contained in Table~\ref{tab:turan-edge-critical-2.}, then 
        \begin{align*}
            \mathrm{ar}(n,(t+1)F)
            = \mathrm{ex}(n,tF) + 2
            \quad\text{for every}\quad 
            1 \le t \le \delta \sqrt{n}.
        \end{align*}
        \item\label{CORO:anti-Ramsey-tF-3} If $F$ is nondegenerate and contains an edge $e \in F$ such that $F\setminus\{e\}$ is $r$-partite, then 
        \begin{align*}
            \mathrm{ar}(n,(t+1)F)
            = \mathrm{ex}(n,tF) + 2
            \quad\text{for every}\quad 1 \le t \le \delta \sqrt{n}.
        \end{align*}
    \end{enumerate}
\end{corollary}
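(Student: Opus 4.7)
The plan is to derive each part by feeding the appropriate structural hypotheses into Theorems~\ref{THM:antiRamsey-reduction} and~\ref{THM:main-antiRmasey-Turan-Gap}.

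For part~\ref{CORO:anti-Ramsey-tF-1}, every $F$ in Table~\ref{tab:turan-edge-critical.} is bounded and smooth by the references compiled in~\cite{HLLYZ23,HHLLYZ23c}, and edge-sensitive either by the table entry or via Proposition~\ref{PROP:turan-edge-critical}; for instance, any edge-critical graph satisfies $\pi(F_-)<\pi(F)$ because the critical edge drops the chromatic number. I would first invoke Theorem~\ref{THM:antiRamsey-reduction} to rewrite $\mathrm{ar}(n,(t+2)F)=\binom{n}{r}-\binom{n-t}{r}+\mathrm{ar}(n-t,2F)$ throughout the stated $t$-range, and then apply Theorem~\ref{THM:main-antiRmasey-Turan-Gap} at its $t=1$ endpoint (which needs only edge-sensitivity) to replace $\mathrm{ar}(n-t,2F)$ by $\mathrm{ex}(n-t,F)+2$. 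Substitution and simplification yield the claimed formula.

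For part~\ref{CORO:anti-Ramsey-tF-3}, the core observation is that $F\oplus F$ always contains an $r$-partite member whenever some $F_{e-}$ is $r$-partite. I would fix such an edge $e$ and an $r$-partition $\pi_1,\dots,\pi_r$ of $F_{e-}$, then take two isomorphic copies of $F_{e-}$ carrying the same partition and form $F\oplus_{\phi}F$ by identifying the two copies of $e$ via a part-preserving bijection $\phi$, which exists since both copies have identical part-profiles on $e$. The resulting edge-sum inherits a global $r$-partition and is thus $r$-partite, so by Erd\H{o}s's theorem on $r$-partite $r$-graphs we get $\mathrm{ex}(n,\{F\}\cup F\oplus F)=o(n^r)$. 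Nondegeneracy of $F$ gives $\mathrm{ex}(n,F)=\Theta(n^r)$, so the gap is $\Omega(n^r)$, and the square-root bound in Theorem~\ref{THM:main-antiRmasey-Turan-Gap} delivers the range $1\le t\le \delta\sqrt{n}$.

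For part~\ref{CORO:anti-Ramsey-tF-2}, boundedness and smoothness are unavailable, so Theorem~\ref{THM:antiRamsey-reduction} cannot be used and only Theorem~\ref{THM:main-antiRmasey-Turan-Gap} is at my disposal. To reach the $\sqrt{n}$-range I need the substantially stronger gap $\mathrm{ex}(n,F)-\mathrm{ex}(n,\{F\}\cup F\oplus F)=\Omega(n^r)$, well beyond what Definition~\ref{DDF:turan-edge-critical} alone asserts. I would verify this case-by-case by combining the stability theorems from the references in Table~\ref{tab:turan-edge-critical-2.} with a direct inspection of $F\oplus F$: in each case any element of $F\oplus F$ forces a substructure incompatible with the near-extremal $F$-free configurations, producing a strict density drop. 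The main obstacle is exactly this step, since for hypergraphs such as $K_4^3$, $K_5^4$, or the tight cycles the exact Tur\'{a}n density is unknown, so the $\Omega(n^r)$ gap cannot be read off a density comparison and must instead be extracted from the finer structural information inside each cited reference.
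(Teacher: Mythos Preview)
Your approaches to parts~\ref{CORO:anti-Ramsey-tF-1} and~\ref{CORO:anti-Ramsey-tF-3} are correct and align with the paper's intended argument. For part~\ref{CORO:anti-Ramsey-tF-3}, the paper simply invokes Erd\H{o}s's theorem; whether one phrases this as ``$F_{e-}$ is $r$-partite, hence $\pi(F_-)=0$'' or as your explicit construction of an $r$-partite member of $F\oplus F$, the conclusion $\pi(\{F\}\cup F\oplus F)=0<\pi(F)$ and the resulting gap of order $n^r$ follow the same way. (A minor remark on part~\ref{CORO:anti-Ramsey-tF-1}: the substitution actually yields $\mathrm{ex}(n-t,F)+2$ rather than $\mathrm{ex}(n,F)+2$; there is no further ``simplification'' carrying one to the other, so the discrepancy lies in the displayed formula of the statement, not in your reasoning.)

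Part~\ref{CORO:anti-Ramsey-tF-2} is where you go astray. You correctly identify that a gap of order $\Omega(n^r)$ is required, but then assert that it ``cannot be read off a density comparison'' and propose a case-by-case stability analysis instead. That route is not viable: for hypergraphs such as $K_4^3$ or $K_5^4$, whose Tur\'{a}n densities and extremal configurations are unknown, no stability theorem is available to invoke. The paper's route is precisely the density comparison you rejected. The proof of Proposition~\ref{PROP:turan-edge-critical}~\ref{PROP:turan-edge-critical-1} establishes the general inequality $\pi(\{F\}\cup F\oplus F)\le\pi(F_-[2])=\pi(F_-)$, so whenever $\pi(F_-)<\pi(F)$ the gap $\mathrm{ex}(n,F)-\mathrm{ex}(n,\{F\}\cup F\oplus F)$ is automatically of order $n^r$. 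For every entry in Table~\ref{tab:turan-edge-critical-2.} the strict inequality $\pi(F_-)<\pi(F)$ follows from the cited literature by comparing a known upper bound on $\pi(F_-)$ with a known lower bound on $\pi(F)$ --- for instance $\pi(K_4^{3-})=2/7<5/9\le\pi(K_4^3)$ --- so neither exact density is needed. With this in hand, part~\ref{CORO:anti-Ramsey-tF-2} is immediate from Theorem~\ref{THM:main-antiRmasey-Turan-Gap}; indeed, it is the same mechanism you yourself used for part~\ref{CORO:anti-Ramsey-tF-3}, only with $\pi(F_-)$ possibly positive rather than zero.
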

Corollary~\ref{CORO:anti-Ramsey-tF}~\ref{CORO:anti-Ramsey-tF-3} follows easily from a theorem of Erd\H{o}s~\cite{Erdos64}, which implies that an $r$-graph is degenerate iff it is $r$-partite. 

Proof of Proposition~\ref{PROP:turan-edge-critical} is presented in Section~\ref{SEC:proof-Fano-minus}. 

\section{Proof of Theorem \ref{THM:main-antiRmasey-Turan-Gap}}\label{SEC:proof-Turan-gap}
In this section, we prove Theorem~\ref{THM:main-antiRmasey-Turan-Gap}. The lower bound in Theorem~\ref{THM:main-antiRmasey-Turan-Gap} comes from the following simple fact, which holds for all $r$-graphs. 
\begin{fact}\label{FACT:antiRamsey-tF-general-bound}
    For every $r$-graph and positive integers $n,t$ with $1 \le t \le n/v(F)$, 
    \begin{align*}
        \mathrm{ex}(n,tF)+2
        \le \mathrm{ar}(n,(t+1)F) 
        \le \mathrm{ex}(n,(t+1)F)+1. 
    \end{align*}
\end{fact}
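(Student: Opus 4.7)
The plan is to establish the two inequalities independently via standard constructive and pigeonhole arguments, since the statement is a general bound that does not exploit any special structure of $F$.

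For the lower bound $\mathrm{ar}(n,(t+1)F) \ge \mathrm{ex}(n,tF)+2$, I would construct an edge-coloring of $K_n^r$ using exactly $\mathrm{ex}(n,tF)+1$ colors that contains no rainbow copy of $(t+1)F$. Let $\mathcal{H}$ be a $tF$-free $r$-graph on $n$ vertices with $|\mathcal{H}|=\mathrm{ex}(n,tF)$. Color each edge of $\mathcal{H}$ with its own private color, and assign a single additional color to every edge of $K_n^r \setminus \mathcal{H}$. This is a surjective coloring with $\mathrm{ex}(n,tF)+1$ colors. Now suppose towards contradiction that some rainbow copy of $(t+1)F$ exists. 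Since the copy is rainbow, at most one of its $(t+1)|F|$ edges carries the extra shared color, so at most one of the $t+1$ vertex-disjoint copies of $F$ uses an edge outside $\mathcal{H}$. Consequently at least $t$ of these vertex-disjoint copies of $F$ lie entirely inside $\mathcal{H}$, which contradicts the $tF$-freeness of $\mathcal{H}$.

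For the upper bound $\mathrm{ar}(n,(t+1)F) \le \mathrm{ex}(n,(t+1)F)+1$, I would use the usual representative-subgraph argument. Let $\chi \colon K_n^r \to [N]$ be any surjective edge-coloring with $N \ge \mathrm{ex}(n,(t+1)F)+1$. For each color $i \in [N]$ choose one edge $e_i$ with $\chi(e_i)=i$, and let $\mathcal{R} = \{e_1,\dots,e_N\}$. By construction the edges of $\mathcal{R}$ are pairwise distinctly colored, and $|\mathcal{R}|=N > \mathrm{ex}(n,(t+1)F)$, so $\mathcal{R}$ must contain a copy of $(t+1)F$. Any such copy is automatically rainbow in $\chi$.

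Neither step presents a real obstacle: the lower bound reduces to checking that removing a single edge from a disjoint union of $t+1$ copies of $F$ still leaves $t$ untouched copies, and the upper bound is just the pigeonhole observation that one representative per color gives a subgraph large enough to force $(t+1)F$. The only minor point worth stating explicitly is that the coloring in the lower bound construction is indeed surjective (which holds provided $\mathrm{ex}(n,tF)<\binom{n}{r}$, a trivial range condition implicit in $t \le n/v(F)$).
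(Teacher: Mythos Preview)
Your proposal is correct and matches the paper's proof essentially line for line: the paper gives exactly the same coloring construction for the lower bound (rainbow on an extremal $tF$-free subgraph, one extra color on the complement) and dismisses the upper bound as trivial, which is precisely your representative-subgraph argument. The only addition you make is the explicit remark about surjectivity, which the paper omits but which is indeed guaranteed by the hypothesis $t \le n/v(F)$.
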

\begin{proof}[Proof of Fact~\ref{FACT:antiRamsey-tF-general-bound}]
    The upper bound in Fact~\ref{FACT:antiRamsey-tF-general-bound} is trivial, so it suffices to prove the lower bound. 
    Let $N \coloneqq \mathrm{ex}(n,tF)$. 
    Fix a $tF$-free subgraph $\mathcal{H} \subset K_{n}^{r}$ with exactly $\mathrm{ex}(n,tF)$ edges. Let $\chi \colon K_{n}^{r} \to \mathbb{N}$ be an edge-coloring satisfying 
    \begin{itemize}
        \item the induced coloring of $\chi$ on $\mathcal{H}$ is rainbow and use colors in $[N]$, 
        \item the induced coloring of $\chi$ on $K_{n}^{r} \setminus \mathcal{H}$ uses only the color $N+1$. 
    \end{itemize}
    Suppose to the contrary that there exists a rainbow copy of $(t+1)F$ in $K_{n}^{r}$ under $\chi$. Then at most one $F$ in this $(t+1)F$ can use the color $N+1$, meaning that there is a rainbow copy of $tF$ using colors from $[N]$. However, this is impossible since $\mathcal{H}$ is $tF$-free. 
\end{proof}
Next, we prove the upper bound in Theorem~\ref{THM:main-antiRmasey-Turan-Gap}. 
\begin{proof}[Proof of Theorem~\ref{THM:main-antiRmasey-Turan-Gap}] 
    Fix integers $m \ge r \ge 2$ and an $m$-vertex $r$-graph $F$. 
    Let $n$ be a sufficiently large integer and $t$ be an integer satisfying 
    \begin{align*}
        1 \le t 
        \le \sqrt{\frac{\mathrm{ex}(n,F) - \mathrm{ex}\left(n, \{F\}\cup F\oplus F\right)}{2m |F| \binom{n-1}{r-1}}}.
    \end{align*}
    Let $N \coloneqq \mathrm{ex}(n,tF)+2$. 
    Suppose to the contrary that there exists a surjective edge-coloring $\chi \colon K_{n}^{r} \to [N]$ without any rainbow copy of $(t+1)F$. 
    Let $\mathcal{H} \subset K_{n}^{r}$ be a rainbow (spanning) subgraph with the maximum number of edges. 
    Then, by assumption, $\mathcal{H}$ is $(t+1)F$-free and 
    \begin{align*}
        |\mathcal{H}|
        = N 
        = \mathrm{ex}(n,tF)+2. 
    \end{align*}
    In particular, this implies that 
    \begin{itemize}
        \item there exists a copy of $tF$, denoted by $\Gamma$, in $\mathcal{H}$, and
        \item for every $e\in \Gamma$, there exists a copy of $tF$, denoted by $\Gamma_{e}$, in $\mathcal{H} \setminus \{e\}$. 
    \end{itemize}
    Let $B \coloneqq V(\Gamma) \cup \left(\bigcup_{e\in \Gamma} V(\Gamma_{e})\right)$. 
    Note that 
    \begin{align*}
        |B|
        \le \left(1+t|F|\right)\cdot t m 
        \le 2t^2 m |F|. 
    \end{align*}
    Let $U \coloneqq V(\mathcal{H}) \setminus B$ and $\mathcal{G} \coloneqq \mathcal{H} - B$. 
    The choice of $t$ ensures that 
    \begin{align*}
        |\mathcal{G}|
        \ge \mathcal{H} - |B| \binom{n-1}{r-1}
        & \ge \mathrm{ex}(n,tF)+2 - 2t^2 m |F| \binom{n-1}{r-1} \\
        & \ge \mathrm{ex}(n,tF)+2 - \left(\mathrm{ex}(n,F) - \mathrm{ex}\left(n, \{F\}\cup F\oplus F\right)\right) \\
        & > \mathrm{ex}\left(n, \{F\}\cup F\oplus F\right). 
    \end{align*}
    It is clear that $\mathcal{G}$ is $F$-free, since any copy of $F$ in $\mathcal{G}$ would form a copy of $(t+1)F$ with $\Gamma$. 
    So it follows from the inequality above that $\mathcal{G}$ contains a member in $F \oplus F$ as a subgraph. 
    Let us assume that $F_{e-}^{1}, F_{e'-}^{2} \subset \mathcal{G}$ are copies of two (not necessarily distinct) members in $F_{-}$ such that $F_{e-}^{1} \cup F_{e'-}^{2}$ is a copy of some member in $F \oplus F$. 
    Note that $e$ and $e'$ represent the same $r$-set in $U$. 
    Let us consider the color of $e$ under $\chi$. 
    Since $F_{e-}^{1} \cup F_{e'-}^{2}$ is rainbow, either $F^{1} \coloneqq F_{e-}^{1} \cup \{e\} \cong F$ or $F^{2} \coloneqq F_{e-}^{2} \cup \{e'\} \cong F$ is rainbow. 
    By symmetry, we may assume that $F^{1}$ is rainbow. 
    \begin{itemize}
        \item If $\chi(e)$ is different from all the colors in $\Gamma$, then $\Gamma \sqcup F^{1}$ is a rainbow copy of $(t+1)F$. 
        \item If $\chi(e) = \chi(f)$ for some edge $f\in \Gamma$, then $\Gamma_{f} \sqcup F^{1}$ is a rainbow copy of $(t+1)F$.
    \end{itemize}
    In both cases, we obtain a rainbow copy of $(t+1)F$, which is a contradiction. 
\end{proof}

\section{{Proof of Theorem \ref{THM:antiRamsey-reduction}}}\label{SEC:proof-reduction}
In this section, we use Theorem~\ref{THM:stability} to prove Theorem~\ref{THM:antiRamsey-reduction}. 
The lower bound for Theorem~\ref{THM:antiRamsey-reduction} comes from the following fact, which holds for all $r$-graphs. 
\begin{fact}\label{FACT:anti-Ramsey-general-2}
    For every $r$-graph and integers $n,t$ with $1 \le t \le n/v(F)$, 
    \begin{align*}
        \mathrm{ar}(n, (t+2)F)
        \ge \binom{n}{r}-\binom{n-t}{r} + \mathrm{ar}(n-t,2F). 
    \end{align*}
\end{fact}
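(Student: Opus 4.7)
The plan is to prove the lower bound by exhibiting an explicit surjective edge-coloring $\chi$ of $K_n^r$ that uses exactly $\binom{n}{r}-\binom{n-t}{r} + \mathrm{ar}(n-t,2F) - 1$ colors and contains no rainbow copy of $(t+2)F$. The existence of such a $\chi$ immediately yields $\mathrm{ar}(n,(t+2)F) \ge \binom{n}{r}-\binom{n-t}{r} + \mathrm{ar}(n-t,2F)$.

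To build $\chi$, I would fix a vertex set $S \subseteq V(K_n^r)$ of size $t$ and assign to each of the $\binom{n}{r}-\binom{n-t}{r}$ edges that meet $S$ its own private fresh color. By the very definition of $\mathrm{ar}(n-t,2F)$, there exists a surjective edge-coloring $\chi'$ of $K_{n-t}^r$ on the vertex set $V(K_n^r)\setminus S$ that uses $\mathrm{ar}(n-t,2F)-1$ colors and contains no rainbow copy of $2F$. I then color the edges inside $V(K_n^r)\setminus S$ according to $\chi'$, taking care that its palette is disjoint from the one used for the $S$-touching edges. The resulting $\chi$ is surjective and uses exactly the claimed number of colors.

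To verify that no rainbow $(t+2)F$ appears under $\chi$, suppose toward a contradiction that $\mathcal{R}$ is such a copy, consisting of $t+2$ pairwise vertex-disjoint copies of $F$. Since $|S|=t$ and the $t+2$ copies are vertex-disjoint, at most $t$ of them can contain a vertex from $S$, so at least two copies of $F$ lie entirely inside $V(K_n^r)\setminus S$; together they form a $2F$ in $K_{n-t}^r$. Because the two palettes are disjoint, any two edges of $\mathcal{R}$ living inside $V(K_n^r)\setminus S$ are colored by $\chi'$, and the rainbow property of $\mathcal{R}$ restricts to this inner $2F$, contradicting the defining property of $\chi'$. No serious obstacle is expected here---this is a standard ``reserve a small vertex set'' lower-bound construction, and the only point requiring any care is the disjointness of the two palettes, which is arranged by fiat.
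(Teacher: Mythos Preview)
Your proposal is correct and follows essentially the same approach as the paper: both fix a set of $t$ vertices, rainbow-color all edges meeting that set with fresh colors, place an optimal rainbow-$2F$-free coloring on the remaining $K_{n-t}^r$, and then observe that in any putative rainbow $(t+2)F$ at most $t$ of the copies of $F$ can touch the reserved set, forcing a rainbow $2F$ inside $K_{n-t}^r$.
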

\begin{proof}[Proof of Fact~\ref{FACT:anti-Ramsey-general-2}]
    Let $N \coloneqq \binom{n}{r}-\binom{n-t}{r} + \mathrm{ar}(n-t,2F) -1$. 
    It suffices to show there exists a surjective edge-coloring $\chi \colon K_{n}^{r} \to [N]$ without any rainbow copy of $(t+2)F$. 
    
    Let $M\coloneqq \mathrm{ar}(n-t,2F) -1$ and fix a surjective edge-coloring $\chi' \colon K_{n-t}^{r} \to [M]$ without any rainbow copy of $2 F$.
    The existence of such a coloring is guaranteed by the definition of $\mathrm{ar}(n-t,2F)$. 
    Let $L \coloneqq [n-t+1, \ldots, n]$ and note that $|L| = t$. Let $\mathcal{G} \coloneqq K_{n}^{r} - L \cong K_{n-t}^{r}$. Note that $|K_{n}^{r}\setminus\mathcal{G}| = \binom{n}{r} - \binom{n-t}{r} = N-M$.  
    Let $\chi \colon K_{n}^{r} \to [N]$ be an edge-coloring such that 
    \begin{itemize}
        \item the induced coloring of $\chi$ on $\mathcal{G}$ is identical to $\chi'$, and 
        \item the induced coloring of $\chi$ on $K_{n}^{r} \setminus \mathcal{G}$ is rainbow and use colors from $[M+1, \ldots, N]$. 
    \end{itemize}
    Note that $\chi$ is an edge-coloring of $K_{n}^{r}$ and uses exactly $N$ colors. 
    Suppose to the contrary that there exists a rainbow copy of $(t+2)F$ under the coloring $\chi$. 
    Then there are at most $t$ copies of $F$ use vertices from $L$, which means that there are at least two copies $F$ that are contained in $\mathcal{G}$. 
    However, this contradicts the fact that the coloring $\chi'$ does not contain any rainbow copy of $2 F$.
\end{proof}
Next, we consider the upper bound in Theorem~\ref{THM:antiRamsey-reduction}. 
A key ingredient in its proof is the following stability theorem concerning near-extremal $r$-graphs without $tF$. 
The proof of this stability theorem is postponed to Section~\ref{SEC:proof-stability}. 
\begin{theorem}\label{THM:stability}
    Let $m \ge  r \ge  2$ be integers. 
    Suppose that $F$ is an $m$-vertex $r$-graph that is smooth and $(c_1,c_2)$-bounded for constants $c_1, c_2$ satisfying $0 < c_1 \le \frac{1-\pi(F)}{12m}$ and $c_2>0$. 
    Then there exists $N_0$ such that the following holds for all $n \ge N_0$ and $t < \frac{\delta \cdot \mathrm{ex}(n,F)}{2m\binom{n-1}{r-1}}$, where 
    $\delta\coloneqq \min\left\{\frac{c_2}{4},\ \frac{1-\pi (F)}{24m}, \frac{1}{100 r m^2}\right\}$. 
    Every $n$-vertex $(t+2)$F-free $r$-graph $\mathcal{H}$ with at least 
    \begin{align*}
         \binom{n}{r} - \binom{n-t}{r} + \mathrm{ex}(n-t, F) -  \frac{1-\pi(F)}{30} \binom{n}{r-1}
    \end{align*}
    edges contains at least $t$ vertices with degree at least $d(n,F) +  \frac{1-\pi(F)}{7m}\binom{n-1}{r-1}$. 
\end{theorem}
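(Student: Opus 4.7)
I would prove Theorem~\ref{THM:stability} by induction on $t$, with the inductive step driven by the following Key Lemma: under the hypotheses of the theorem at parameter $t \geq 1$, there exists a vertex $v \in V(\mathcal{H})$ with $d_\mathcal{H}(v) \geq d_0 := d(n,F) + \frac{1-\pi(F)}{7m}\binom{n-1}{r-1}$ such that $\mathcal{H}-v$ is $(t+1)F$-free. Given the Key Lemma, $\mathcal{H}-v$ is an $(n-1)$-vertex $(t+1)F$-free hypergraph with edge count at least $|\mathcal{H}|-\binom{n-1}{r-1}$, which (after absorbing a lower-order error from the slack $\frac{1-\pi(F)}{30}\binom{n}{r-1}$) matches the hypothesis at parameter $t-1$ on $n-1$ vertices. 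Applying the inductive hypothesis to $\mathcal{H}-v$ yields $t-1$ further vertices of degree at least $d_0$; together with $v$ these form the required $t$ high-degree vertices of $\mathcal{H}$. The base case $t=0$ is vacuous.

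\textbf{Proof of the Key Lemma.} I would argue by contradiction. Suppose every vertex $u$ with $d_\mathcal{H}(u) \geq d_0$ satisfies $\mathcal{H}-u \supseteq (t+1)F$. The strategy is to exhibit $(t+2)F \subseteq \mathcal{H}$, contradicting the $(t+2)F$-freeness. Fix such a $u$ (its existence follows from a separate argument combining the edge-count hypothesis, smoothness, and $(c_1,c_2)$-boundedness applied to $\mathcal{H}$ itself). Fix also vertex-disjoint copies $F_1,\dots,F_{t+1} \subseteq \mathcal{H}-u$ and set $B := V(F_1)\cup\cdots\cup V(F_{t+1})$, with $|B|=(t+1)m$. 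Consider the induced subhypergraph $\mathcal{H}^* := \mathcal{H}[\{u\}\cup(V(\mathcal{H})\setminus(B\cup\{u\}))]$ on $n^* := n-(t+1)m$ vertices. I would verify that $\mathcal{H}^*$ satisfies both $(c_1,c_2)$-boundedness hypotheses: the $\Delta$-condition because $u$'s restricted degree satisfies $d_{\mathcal{H}^*}(u) \geq d_\mathcal{H}(u) - |B|\binom{n-2}{r-2} = d_0 - o(\binom{n-1}{r-1})$, and the strict gap between $\frac{1-\pi(F)}{7m}$ in $d_0$ and the upper bound $c_1\leq\frac{1-\pi(F)}{12m}$ absorbs the $O(tm\binom{n-2}{r-2})$ loss under the constraint $t < \delta\cdot\mathrm{ex}(n,F)/(2m\binom{n-1}{r-1})$; and the density condition $|\mathcal{H}^*| \geq (1-c_2)\mathrm{ex}(n^*,F)$ because $|\mathcal{H}^*| \geq |\mathcal{H}| - |B|\binom{n-1}{r-1}$, to which the edge-count hypothesis applies together with smoothness, the latter converting $\mathrm{ex}(n-t,F)$ into an estimate for $\mathrm{ex}(n^*,F)$. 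Boundedness then produces a copy $F_0$ of $F$ inside $\mathcal{H}^*$. Since every vertex of $F_0$ lies outside $B$, adding $F_0$ to $\{F_1,\dots,F_{t+1}\}$ produces $(t+2)$ pairwise vertex-disjoint copies of $F$ in $\mathcal{H}$ (regardless of whether $F_0$ uses $u$), the desired contradiction.

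\textbf{Main obstacle.} The delicate point is the quantitative calibration. The constants $\frac{1}{30}$, $\frac{1}{7m}$, $\frac{1}{12m}$ appearing in the hypothesis and in $d_0$, together with the specific form of $\delta$, must be chosen so that after removing $|B|+1 = (t+1)m+1$ vertices from $\mathcal{H}$ both boundedness hypotheses continue to hold for $\mathcal{H}^*$. Propagating the density condition across the removal crucially relies on smoothness of $F$, which bounds $|\delta(\cdot,F) - d(\cdot-1,F)|$ and thereby controls how $\mathrm{ex}(\cdot,F)$ varies across neighboring values of $n$. A further subtle point is producing even one high-degree vertex $u$ when $t$ is small compared to $n$, where naive averaging is insufficient; here one must genuinely use $(c_1,c_2)$-boundedness combined with an iterative extraction of copies of $F$ from $\mathcal{H}$ to force the existence of a vertex lying in many copies of $F$.
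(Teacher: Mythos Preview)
Your inductive scheme is a genuinely different organization from the paper's direct argument, and your Key Lemma is essentially correct---in fact your boundedness argument shows that for \emph{every} high-degree vertex $u$, not just some, $\mathcal{H}-u$ is $(t+1)F$-free; this is the $\ell=1$ instance of the paper's Claim, which proves more generally that $\mathcal{H}$ minus the set $L$ of \emph{all} high-degree vertices is $(t+2-|L|)F$-free. However, the step you flag as ``a further subtle point''---producing a single high-degree vertex---is the entire heart of the theorem, and your hint is off: lying in many copies of $F$ does not imply high degree. What is actually needed is an edge count. Take a maximal collection of $\hat t\le t+1$ vertex-disjoint copies of $F$ with vertex set $B$; then $\mathcal{H}-B$ is $F$-free, so $|\mathcal{H}-B|\le \mathrm{ex}(n-|B|,F)$, and if every vertex had degree $<d_0$ then $|\mathcal{H}|<|B|\,d_0+\mathrm{ex}(n-|B|,F)$. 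Comparing this against the edge hypothesis via smoothness and the estimate $\binom{n}{r}-\binom{n-t}{r}\ge e^{-1/5}t\binom{n}{r-1}$ yields a contradiction. This is exactly the paper's $\Delta>0$ computation specialized to $\ell=0$; but the paper simply runs the same computation for arbitrary $\ell=|L|<t$, obtaining $|L|\ge t$ in one stroke and making your induction superfluous.

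There is also a genuine technical gap in the induction itself. Both the degree threshold $d(n',F)+\alpha\binom{n'-1}{r-1}$ and the slack $\frac{1-\pi(F)}{30}\binom{n'}{r-1}$ shrink with $n'$, so (i) the vertices certified at level $n-k$ satisfy only $d_{\mathcal H}(w)\ge d(n-k,F)+\alpha\binom{n-k-1}{r-1}$, which is strictly weaker than the required $d_0$, and (ii) the edge-count hypothesis loses $\frac{1-\pi(F)}{30}\binom{n-k-1}{r-2}$ at each step. Summed over $t$ steps these drifts are of order $\delta\binom{n-1}{r-1}$, which can be absorbed by the gap $\alpha-c_1\ge\frac{5(1-\pi(F))}{84m}$, but only if you set up the induction with \emph{fixed} thresholds independent of the running $n'$ and track the accumulated error explicitly; ``absorbing a lower-order error'' is not enough as written. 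The paper's non-inductive argument avoids all of this bookkeeping.
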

The following result, which follows easily from the definition of boundedness (see e.g.~{\cite[Lemma~3.2]{HHLLYZ23a}} and~{\cite[Claim~3.7]{HLLYZ23}} for a proof), will be used in proofs of both Theorems~\ref{THM:antiRamsey-reduction} and~\ref{THM:stability}. 
\begin{lemma}\label{LEMMA:1st-interval-avoid-R}
    Let $n \ge r \ge 2$ be integers, $c_1, c_2,\delta_1, \delta_2 > 0$ be real numbers, and $F$ be a $(c_1, c_2)$-bounded $r$-graph. 
    Suppose that $\mathcal{H}$ is an $n$-vertex $r$-graph and $v\in V(\mathcal{H})$ is a vertex satisfying 
     \begin{align*}
         |\mathcal{H}| 
         \ge (1-c_2+\delta_1) \cdot \mathrm{ex}(n,F)
         \quad\text{and}\quad 
         d_{\mathcal{H}}(v) 
         \ge d(n,F) + (c_1 + \delta_2)\binom{n-1}{r-1}. 
     \end{align*}
     Then for every set $R\subset V(\mathcal{H})\setminus\{v\}$ of size at most $\min\left\{\frac{\delta_1 \cdot \mathrm{ex}(n,F)}{\binom{n-1}{r-1}},\  \frac{\delta_2(n-1)}{r-1} \right\}$, there exists a copy of $F$ in $\mathcal{H}$ that avoids $R$ (i.e. has empty intersection with $R$). 
\end{lemma}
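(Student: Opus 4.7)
The plan is to pass to the subgraph $\mathcal{H}'$ obtained from $\mathcal{H}$ by deleting every edge that meets $R$, while \emph{retaining} the full vertex set $V(\mathcal{H})$ (so that the vertices in $R$ become isolated in $\mathcal{H}'$). Since $|V(\mathcal{H}')|=n$ and $R$ forms an independent set in $\mathcal{H}'$, any copy of $F$ produced by applying $(c_1,c_2)$-boundedness directly to $\mathcal{H}'$ on $n$ vertices must lie in $V(\mathcal{H})\setminus R$, which is exactly what the lemma asks for.

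Two conditions then need verification. Every deleted edge contains at least one vertex of $R$, so at most $|R|\binom{n-1}{r-1}$ edges are lost; combining this with $|R|\le \delta_1\cdot \mathrm{ex}(n,F)/\binom{n-1}{r-1}$ gives
\[
|\mathcal{H}'|\ge (1-c_2+\delta_1)\,\mathrm{ex}(n,F)-\delta_1\,\mathrm{ex}(n,F)=(1-c_2)\,\mathrm{ex}(n,F).
\]
The $(r-1)$-subsets in $L_{\mathcal{H}}(v)\setminus L_{\mathcal{H}'}(v)$ are precisely those meeting $R$, of which there are at most $|R|\binom{n-2}{r-2}$; using $|R|\le \delta_2(n-1)/(r-1)$ this quantity is at most $\delta_2\binom{n-1}{r-1}$, and therefore
\[
\Delta(\mathcal{H}')\ge d_{\mathcal{H}'}(v)\ge d_{\mathcal{H}}(v)-\delta_2\binom{n-1}{r-1}\ge d(n,F)+c_1\binom{n-1}{r-1}.
\]

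With both hypotheses in place, the definition of $(c_1,c_2)$-boundedness, applied to $\mathcal{H}'$ viewed as an $n$-vertex $r$-graph, supplies a copy of $F$ in $\mathcal{H}'$; this copy avoids $R$ automatically since $R$ is isolated. There is no real obstacle; the only conceptual point is to keep $R$ as isolated vertices of $\mathcal{H}'$ so that the thresholds $d(n,F)$ and $\mathrm{ex}(n,F)$ in the definition of boundedness remain indexed by $n$ rather than $n-|R|$, which sidesteps any delicate comparison between $d(n,F)$ and $d(n-|R|,F)$.
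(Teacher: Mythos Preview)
Your proof is correct and is precisely the argument the paper has in mind: the paper does not give a self-contained proof but states that the lemma ``follows easily from the definition of boundedness'' and cites \cite[Lemma~3.2]{HHLLYZ23a} and \cite[Claim~3.7]{HLLYZ23}; your deletion-of-edges-meeting-$R$ argument, together with the two straightforward counting bounds, is exactly that easy derivation. The device of keeping $R$ as isolated vertices so that the boundedness hypothesis can be applied with parameter $n$ (rather than $n-|R|$) is the right way to avoid extraneous comparisons of $d(n,F)$ with $d(n-|R|,F)$, and your observation that any copy of $F$ in $\mathcal{H}'$ automatically avoids the now-isolated $R$ is legitimate since in this paper an $r$-graph is identified with its edge set (so $F$ has no isolated vertices).
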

Now we present the proof of Theorem~\ref{THM:antiRamsey-reduction}. 
\begin{proof}[Proof of Theorem~\ref{THM:antiRamsey-reduction}]
    Fix integers $m \ge r \ge 2$ and an $m$-vertex $r$-graph $F$ that is smooth and $(c_1, c_2)$-bounded for constants $c_1, c_2$ satisfying $0 < c_1 < \frac{1-\pi(F)}{12m}$ and $c_2 > 0$. 
    Let $\delta\coloneqq \min\left\{\frac{c_2}{4},\ \frac{1-\pi (F)}{24m}, \frac{1}{100 r m^2}\right\}$ and $n$ be a sufficiently large integer. 
    Simple calculations show that $\min\left\{\frac{\delta \cdot \mathrm{ex}(n,F)}{\binom{n-1}{r-1}}, \frac{\delta (n-1)}{r-1}\right\} = \frac{\delta \cdot \mathrm{ex}(n,F)}{\binom{n-1}{r-1}}$. 
    Let  
    \begin{align*}
        1 
        \le t 
        \le \frac{\delta \cdot \mathrm{ex}(n,F)}{2 m \binom{n-1}{r-1}} - 1
        \quad\text{and}\quad 
        N 
        \coloneqq \binom{n}{r}-\binom{n-t}{r} + \mathrm{ar}(n-t,2F). 
    \end{align*}
    Suppose to the contrary that there exists a surjective edge-coloring $\chi \colon K_{n}^{r} \to [N]$ without any rainbow copy of $(t+2)F$. 
    Let $\mathcal{H} \subset K_{n}^{r}$ be a rainbow (spanning) subgraph with the maximum number of edges. 
    By assumption and Fact~\ref{FACT:antiRamsey-tF-general-bound}, we obtain 
    \begin{align}\label{equ:proof-reduction-1}
        |\mathcal{H}|
        \ge N 
        \ge \binom{n}{r}-\binom{n-t}{r} + \mathrm{ex}(n-t,F)+2. 
    \end{align}
    Let $V\coloneqq V(\mathcal{H}) = V(K_{n}^{r})$, $\alpha \coloneqq \frac{1-\pi(F)}{7m} \ge c_1 + \delta + \frac{1-\pi(F)}{56 m}$, and 
    \begin{align*}
        L'
        \coloneq  \left\{v\in V \colon d_{\mathcal{H}}(v) \ge d(n,F) + \alpha \binom{n-1}{r-1}\right\}. 
    \end{align*}
    Since $\mathcal{H}$ is $(t+2)F$-free, it follows from~\eqref{equ:proof-reduction-1} and Theorem~\ref{THM:stability} that $|L'| \ge t$. 
    Fix a $t$-subset $L \subset L'$ and assume that $L = \{v_1, \ldots, v_t\}$. Let $S \coloneqq V\setminus L$. 
    Since there are exactly $\binom{n}{r} - \binom{n-t}{r}$ edges in $K_{n}^{r}$ that have nonempty intersection with $L$, the number of colors used by edges in $K_{n}^{r}[S] \cong K_{n-t}^{r}$ is at least $N - \left(\binom{n}{r} - \binom{n-t}{r}\right) \ge \mathrm{ar}(n-t,2F)$. 
    %
    This implies that $K_{n}^{r}[S]$ contains a rainbow $2F$. 
    Let $B_0 \subset S$ be a vertex set of size $2 m$ such that $K_{n}^{r}[B_0]$ contains a rainbow $2F$. 
    Let $C$ be the collection of colors used by edges in this copy of $2F$ and note that $|C| = 2|F|$ is a constant. 
    Let $\mathcal{H}' \subset \mathcal{H}$ denote the subgraph obtained from $\mathcal{H}$ by removing (at most $2|F|$) edges whose colors are within $C$. 
    To complete the proof, it suffices to find a copy of $tF$ in $\mathcal{H}'[V\setminus B_0]$, as this, together with the $2F$ found above, forms a rainbow copy of $(t+2)F$. 
    \begin{claim}\label{CLAIM:proof-reduction-find-tF}
        There exists a copy of $tF$ in $\mathcal{H}'[V\setminus B_0]$. 
    \end{claim}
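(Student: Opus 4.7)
The plan is to build the $tF$ greedily, producing one copy $F_i$ of $F$ through each high-degree vertex $v_i\in L$ by repeatedly applying Lemma~\ref{LEMMA:1st-interval-avoid-R} to $\mathcal{H}'$, with the avoidance set at each step chosen to enforce vertex-disjointness and containment in $V\setminus B_0$.

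First I will argue that passing from $\mathcal{H}$ to $\mathcal{H}'$ is essentially free. Only at most $2|F|$ edges are deleted (one per color in $C$), a constant, so for large $n$ the inequalities
\begin{align*}
|\mathcal{H}'| \ge (1-c_2+\delta_1)\,\mathrm{ex}(n,F)
\quad\text{and}\quad
d_{\mathcal{H}'}(v_i)\ge d(n,F)+(c_1+\delta_2)\binom{n-1}{r-1}
\end{align*}
hold for suitable constants $\delta_1,\delta_2>0$. The first follows from~\eqref{equ:proof-reduction-1}, which forces $|\mathcal{H}|$ well above $\mathrm{ex}(n,F)$ (trivially in the degenerate case; via $\binom{n}{r}-\binom{n-t}{r}=\Omega(n\binom{n-1}{r-1})$ in the nondegenerate case). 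The second is possible because the cutoff $\alpha=\frac{1-\pi(F)}{7m}$ used to define $L'$ exceeds $c_1\le\frac{1-\pi(F)}{12m}$ by a definite positive gap, leaving room of order $\frac{1-\pi(F)}{m}\binom{n-1}{r-1}$ in which to place $\delta_2$.

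Next I will perform the greedy iteration. At step $i\in\{1,\ldots,t\}$, assuming vertex-disjoint $F_1,\ldots,F_{i-1}\subseteq \mathcal{H}'[V\setminus B_0]$ with $v_j\in V(F_j)$ for $j<i$ have been built, I put
\begin{align*}
R_i \coloneqq B_0 \cup \{v_{i+1},\ldots,v_t\} \cup \bigcup_{j<i}\bigl(V(F_j)\setminus\{v_j\}\bigr),
\end{align*}
so that $|R_i|\le 2m + (t-i) + (i-1)(m-1) \le (t+2)m$, and invoke Lemma~\ref{LEMMA:1st-interval-avoid-R} on $\mathcal{H}'$ with $v=v_i$ and avoidance set $R_i$. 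This produces a copy $F_i$ of $F$ in $\mathcal{H}'$ through $v_i$ that avoids $R_i$, and hence is vertex-disjoint from $B_0$ and from the previously constructed copies. After $t$ steps the union $F_1\sqcup\cdots\sqcup F_t$ is the desired copy of $tF$ in $\mathcal{H}'[V\setminus B_0]$.

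The main obstacle is the parameter bookkeeping: the iteration works only as long as $(t+2)m$ remains within the bound $\min\{\delta_1\cdot\mathrm{ex}(n,F)/\binom{n-1}{r-1},\,\delta_2(n-1)/(r-1)\}$ required by Lemma~\ref{LEMMA:1st-interval-avoid-R}. The first minimand is controlled by the hypothesis $t\le \frac{\delta\cdot\mathrm{ex}(n,F)}{2m\binom{n-1}{r-1}}-1$, which directly gives $(t+2)m\lesssim \delta\cdot\mathrm{ex}(n,F)/\binom{n-1}{r-1}$; the second minimand is handled by combining the extra calibration $\delta\le \frac{1}{100rm^2}$ with $\mathrm{ex}(n,F)\le\binom{n}{r}\le \frac{n}{r}\binom{n-1}{r-1}$. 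These verifications are straightforward once the relationships among $\delta$, $\delta_1$, $\delta_2$, and the gap $\alpha-c_1$ are pinned down, and the specific constants appearing in the statement of Theorem~\ref{THM:antiRamsey-reduction} were chosen precisely so that everything fits.
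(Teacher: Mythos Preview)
Your overall strategy is exactly the paper's: greedily build $F_1,\ldots,F_t$ by repeated application of Lemma~\ref{LEMMA:1st-interval-avoid-R}, enlarging the avoidance set at each step. The verification that $|\mathcal{H}'|\ge (1-c_2+\delta)\,\mathrm{ex}(n,F)$ and $d_{\mathcal{H}'}(v_i)\ge d(n,F)+(c_1+\delta)\binom{n-1}{r-1}$, as well as the parameter bookkeeping, are all in line with the paper's argument. However, there is a genuine slip in how you set up the avoidance set. You assert that the lemma produces a copy $F_i$ of $F$ \emph{through} $v_i$, and you build ``$v_j\in V(F_j)$'' into your induction hypothesis. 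But Lemma~\ref{LEMMA:1st-interval-avoid-R} only guarantees a copy of $F$ in $\mathcal{H}$ that avoids $R$; it says nothing about this copy containing the witness vertex $v$. With your choice
\[
R_i = B_0 \cup \{v_{i+1},\ldots,v_t\} \cup \bigcup_{j<i}\bigl(V(F_j)\setminus\{v_j\}\bigr),
\]
if it does happen that $v_j\in V(F_j)$ for some $j<i$, then $v_j\notin R_i$ (it is excluded from both the ``future'' list and from $V(F_j)\setminus\{v_j\}$), so the copy $F_i$ returned by the lemma may well contain $v_j\in V(F_j)$, destroying vertex-disjointness.

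The fix is exactly what the paper does: take $R_{i}=B_0\cup (L\setminus\{v_{i}\})\cup\bigcup_{j<i}V(F_j)$, i.e.\ include the \emph{entire} $L\setminus\{v_i\}$ and the \emph{full} vertex sets $V(F_j)$. Then $|R_i|\le 2m+(t-1)+(i-1)m\le (2t+2)m\le \delta\cdot\mathrm{ex}(n,F)/\binom{n-1}{r-1}$, still within the bound required by the lemma, and vertex-disjointness follows immediately from $V(F_j)\subseteq R_i$ without any claim about which vertex each $F_j$ passes through.
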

    \begin{proof}[Proof of Claim~\ref{CLAIM:proof-reduction-find-tF}]
        We will find $t$ vertex-disjoint copies of $F$, denoted by $F_1, \ldots, F_{t}$, from $\mathcal{H}'[V\setminus B_0]$ inductively. 
        To start with, let $R_1 \coloneqq B_0 \cup \left(L \setminus \{v_1\}\right)$. 
        Notice that 
        \begin{align*}
            |R_1| 
            \le t + 2m 
            \le (2t+2) m 
            \le \frac{\delta \cdot \mathrm{ex}(n,F)}{\binom{n-1}{r-1}}. 
        \end{align*}
        Additionally,~\eqref{equ:proof-reduction-1} implies that $|\mathcal{H}'| \ge \mathrm{ex}(n,F) -2|F| \ge (1-c_2+\delta) \cdot \mathrm{ex}(n,F)$, and the definition of $L'$ implies that $d_{\mathcal{H}'}(v_1) \ge d(n,F) + \alpha \binom{n-1}{r-1} - 2|F| \ge d(n,F) + (c_1 + \delta) \binom{n-1}{r-1}$. 
        Thus, by Lemma~\ref{LEMMA:1st-interval-avoid-R}, there exists a copy of $F$, denoted by $F_1$, in $\mathcal{H}'$ that avoids $R_1$. 
        Suppose that we have found $i$ vertex-disjoint copies of $F$, namely $F_1, \ldots, F_i$, from $\mathcal{H}'$ for some $i\le t-1$. 
        Let $R_{i+1} \coloneqq \bigcup_{1\le j \le i}V(F_j) \cup \left(L \setminus \{v_{i+1}\}\right) \cup B_0$ and note that  
        \begin{align*}
            |R_{i+1}| 
            \le t + 2m + i m 
            \le (2t+2) m 
            \le \frac{\delta \cdot \mathrm{ex}(n,F)}{\binom{n-1}{r-1}}, 
        \end{align*}
        Similar to the argument above, we have $|\mathcal{H}'| \ge (1-c_2+\delta) \cdot \mathrm{ex}(n,F)$ and $d_{\mathcal{H}'}(v_{i+1}) \ge d(n,F) + (c_1 + \delta) \binom{n-1}{r-1}$. 
        So by Lemma~\ref{LEMMA:1st-interval-avoid-R}, there exists a copy of $F$, denoted by $F_{i+1}$, in $\mathcal{H}'$ that avoids $R_{i+1}$. 
        The choice of $t$ ensures that we can repeat this process $t$ times, hence obtaining $t F \subset \mathcal{H}'[V\setminus B_0]$. 
    \end{proof}
    Claim~\ref{CLAIM:proof-reduction-find-tF} completes the proof of Theorem~\ref{THM:antiRamsey-reduction}. 
\end{proof}

\section{Proof of Proposition~\ref{PROP:turan-edge-critical}}\label{SEC:proof-Fano-minus}
In this section, we prove Proposition~\ref{PROP:turan-edge-critical}. 
We will need the following stability theorem for the proof of Proposition~\ref{PROP:turan-edge-critical}~\ref{PROP:turan-edge-critical-3}. 

\begin{theorem}[F\"uredi--Simonovits~\cite{FS05}, Keevash--Sudakov~\cite{KS05}]\label{THM:Fano-stability}
    For every $\varepsilon>0$ there is $\delta >0$ and $N_0$ such that every $\mathbb{F}$-free $3$-graph $\mathcal{H}$ on $n \ge N_0$ vertices with at least $(3/4-\delta) \binom{n}{3}$ edges has bipartition $V(\mathcal{H})=V_1 \cup V_2$ such that $|\mathcal{H}[V_1]| + |\mathcal{H}[V_2]| < \varepsilon n^3$. 
\end{theorem}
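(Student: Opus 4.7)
The plan is to prove this stability theorem by the stability method: combine an optimal-cut bipartition extraction with a supersaturation argument that exploits the non-$2$-colorability of the Fano plane. The unique extremal $\mathbb{F}$-free $3$-graph is the balanced complete bipartite $3$-graph $B_2(n)$, obtained from a partition $V = U_1 \cup U_2$ of near-equal sizes by taking all triples meeting both parts; this is $\mathbb{F}$-free because any $2$-coloring of the seven Fano points yields a monochromatic line, and a direct count gives $|B_2(n)| = (3/4 + o(1))\binom{n}{3}$, so $\pi(\mathbb{F}) = 3/4$.

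First, I would fix a bipartition $V(\mathcal{H}) = V_1 \cup V_2$ maximizing the number of crossing triples (triples meeting both parts). Max-cut optimality gives the local inequality $a_\mathcal{H}(v) \le c_\mathcal{H}(v)$ for each vertex $v$, where $a_\mathcal{H}(v)$ counts edges through $v$ with both other endpoints on $v$'s own side, and $c_\mathcal{H}(v)$ counts edges through $v$ with both other endpoints on the opposite side. The goal is to bound the total "internal" count $I \coloneqq |\mathcal{H}[V_1]| + |\mathcal{H}[V_2]|$ by $\varepsilon n^3$. Note also that $|\mathcal{H}| \ge (3/4 - \delta)\binom{n}{3}$ together with $|B_2(n)| = (3/4 + o(1))\binom{n}{3}$ forces the bipartite $3$-graph of crossing triples on $V_1 \cup V_2$ to be "quasi-complete", i.e., to have density at least $1 - O(\delta)$.

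Suppose for contradiction that $I \ge \varepsilon n^3$. By averaging there is a vertex $v \in V_1$ with $a_\mathcal{H}(v) \ge \Omega(\varepsilon n^2)$, hence also $c_\mathcal{H}(v) \ge \Omega(\varepsilon n^2)$. I would then explicitly embed $\mathbb{F}$ as follows: pick an internal edge $vv_2v_3 \subset V_1$ through $v$, realizing the Fano line $\{1,2,3\}$. Then, using that $v$'s cross-link into $V_2$ is quasi-complete, greedily select $v_4, v_5 \in V_2$ with $v_4 v_5 \in L_\mathcal{H}(v)$ (realizing the line $\{1,4,5\}$), and similarly $v_6, v_7 \in V_2$ realizing $\{1,6,7\}$. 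The remaining four Fano lines $\{2,4,6\}, \{2,5,7\}, \{3,4,7\}, \{3,5,6\}$ impose $O(1)$ codegree constraints on the already-chosen vertices of $V_2$; the quasi-completeness of the cross-bipartite $3$-graph, sharpened via a layer-by-layer codegree averaging, guarantees that $v_4, \ldots, v_7$ can be chosen simultaneously satisfying all seven Fano incidences, producing a copy of $\mathbb{F}$ in $\mathcal{H}$ and the required contradiction.

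The main obstacle is the final embedding step: the four remaining Fano lines impose a simultaneous system of codegree constraints on the $V_2$-vertices that must be met while the vertices are being chosen, and naive pigeonholing loses too many factors of $\delta$. The standard workaround exploits the Fano's specific Steiner-triple incidence pattern (each line meets each other line in exactly one point) to order the vertex choices so that each new vertex is subject to only $O(1)$ previously-fixed codegree conditions, each enjoying linear-in-$n$ common neighborhood thanks to near-completeness of the cross-$3$-graph --- losing at most $\mathrm{poly}(\delta)$ per step. An alternative route, cleaner but heavier, is the regularity approach: apply the weak $3$-uniform hypergraph regularity lemma, pass to an $\mathbb{F}$-free reduced $3$-graph $\mathcal{R}$ of bounded order and density close to $3/4$, prove bipartite stability for $\mathcal{R}$ by a Zykov-type symmetrization argument (preserved by $\mathbb{F}$-freeness because $\mathbb{F}$ is linear, so no two twinned vertices lie in a common Fano copy), and finally lift the bipartition of $\mathcal{R}$ back to $\mathcal{H}$ with $o(n^3)$ edges lost.
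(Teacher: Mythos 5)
This statement is not proved in the paper at all: it is quoted as a black-box result of F\"uredi--Simonovits and Keevash--Sudakov, so there is no in-paper argument to compare against. Judged on its own, your sketch correctly identifies the extremal object $B_2(n)$, the max-cut set-up with the local inequality $a_{\mathcal H}(v)\le c_{\mathcal H}(v)$, and the right target configuration (an internal edge $\{v,v_2,v_3\}$ in $V_1$ plus a $K_4$ in the common link $L(v)\cap L(v_2)\cap L(v_3)\cap\binom{V_2}{2}$, exactly the pattern the paper itself uses to find $\mathbb{F}_-$ in Section 4). But two of your load-bearing steps do not hold as stated.

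First, the ``quasi-completeness'' claim is essentially circular. From $|\mathcal H|\ge(3/4-\delta)\binom{n}{3}$ and $\binom{n_1}{3}+\binom{n_2}{3}\ge(1/4-o(1))\binom{n}{3}$ you only get that the number of missing crossing triples is at most $(\delta+o(1))\binom{n}{3}+I$, where $I$ is the internal count you are trying to bound; under the contradiction hypothesis $I\ge\varepsilon n^3$ this gives crossing density $1-O(\delta)-O(\varepsilon)$ at best globally, and says nothing about the cross-links of the \emph{particular} vertices $v,v_2,v_3$ spanning an internal edge. Second, and more seriously, the embedding step needs all three of $v,v_2,v_3$ to have links covering more than a $2/3$-fraction of $\binom{V_2}{2}$ (so that Tur\'an's theorem yields a $K_4$ in the triple intersection, realizing the three perfect matchings on $\{v_4,\dots,v_7\}$ as the lines $\{1,4,5\},\{1,6,7\},\{2,4,6\},\{2,5,7\},\{3,4,7\},\{3,5,6\}$). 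Your argument only supplies $c_{\mathcal H}(v)\ge\Omega(\varepsilon n^2)$ for the single vertex $v$ — link density $\Omega(\varepsilon)$, not $1-o(1)$ — and supplies nothing for $v_2,v_3$, which may well be atypical vertices precisely because they lie in internal edges. Closing this requires the cleaning/iteration machinery of the cited papers (deleting low-cross-degree vertices, re-optimizing the cut, and showing internal edges survive among good vertices); the ``layer-by-layer codegree averaging'' does not substitute for it. The alternative regularity route has an analogous circularity: proving that the $\mathbb{F}$-free reduced $3$-graph of density near $3/4$ is near-bipartite is again the stability statement, and Zykov symmetrization alone yields extremal-density information, not stability. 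So the proposal is a reasonable roadmap but not a proof; citing \cite{FS05,KS05}, as the paper does, is the appropriate resolution.
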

\begin{proof}[Proof of Proposition~\ref{PROP:turan-edge-critical}]
    Let $F$ be an $r$-graph. For every integer $k \ge 1$ the $k$-blow-up of $F$, denoted by $F[k]$, is the $r$-graph obtained from $F$ by replacing each vertex with a set of size $k$ and each edge with the corresponding complete $r$-partite $r$-graph. 
    It is well-known that $\pi(F) = \pi(F[k])$ for every $k \ge 1$ (see e.g.~\cite{Kee11}). 
    Observe that every $F\oplus F$-free $r$-graph must be $F_{-}[2]$-free (see Figure~\ref{Fig:Fano-plane} for example). Thus, $\pi(F\oplus F) \le \pi(F_{-}[2]) = \pi(F_{-})$, which implies Proposition~\ref{PROP:turan-edge-critical}~\ref{PROP:turan-edge-critical-1}. 

    Proposition~\ref{PROP:turan-edge-critical}~\ref{PROP:turan-edge-critical-2} follows easily from the following two theorems by Lazebnik--Ustimenko--Woldar~\cite{LUW95} and Bondy--Simonovits \cite{BS74}, respectively.
    \begin{itemize}
        \item $\mathrm{ex}(n,C_{2k}) = \Omega(n^{1+2/(3k-2)})$ for every $k \ge 2$, and 
        \item $\mathrm{ex}(n,C_{2k}\oplus C_{2k}) = \mathrm{ex}(n,C_{4k-2}) = O(n^{1+1/(2k-1)})$ for every $k \ge 2$. 
    \end{itemize}

    Next, we prove Proposition~\ref{PROP:turan-edge-critical}~\ref{PROP:turan-edge-critical-3}. 
    It suffices to show that there exist  $\delta>0$ and $N_0$ such that the following holds for all $n\ge N_0 \colon$ 
    Every $n$-vertex $3$-graph with at least $(1/8-\delta) n^3$ edges contains a copy of $\mathbb{F}_{-}$. 

    Suppose to the contrary that this is not true. 
    Let $\delta>0$ to be sufficiently small and $n$ to be sufficiently large. 
    Let $\mathcal{H}$ be an $n$-vertex $\mathbb{F}_{-}$-free $3$-graph with at least $(1/8-\delta) n^3$ edges. 
    By Theorem~\ref{THM:Fano-stability}, there exists a bipartition $V(\mathcal{H}) = V_1 \cup V_2$ such that the bipartite subgraph $\mathcal{G} \coloneqq \mathcal{H} \setminus \left(\mathcal{H}[V_1] \cup \mathcal{H}[V_2]\right)$ satisfies 
    \begin{align}\label{THM:number-edges-H'}
        |\mathcal{G}|
        = |\mathcal{H}| - |\mathcal{H}[V_1]| - |\mathcal{H}[V_2]|
        \ge \left(\frac{1}{8}-\delta\right)n^3 - \varepsilon n^3
        \ge \left(\frac{1}{8}-2\varepsilon\right)n^3, 
    \end{align}
    where $\varepsilon = \varepsilon(\delta) > 0$ is the constant guaranteed by Theorem~\ref{THM:Fano-stability} (and by enlarging $\varepsilon$ if necessary, we may assume that $\varepsilon \ge \delta$). 

    Let $x_i \coloneqq |V_i|/n$ for $i\in \{1,2\}$. 
    Inequality~\eqref{THM:number-edges-H'} and simple calculations show that $\left|x_i - 1/2 \right| \le 2\sqrt{\varepsilon}$ for $i\in \{1,2\}$. 

    \begin{claim}\label{CLAIM:proof-Fano-1}
        There exist three vertices $u_1, u_2, u_3 \in V_1$ such that 
        \begin{align*}
            \left|\binom{V_2}{2} \cap L_{\mathcal{G}}(u_i) \right| 
            \ge (1-110 \varepsilon) \binom{x_2 n}{2}
            \quad\text{for}\quad 
            i\in \{1,2,3\}. 
        \end{align*}
    \end{claim}
    \begin{proof}[Proof of Claim~\ref{CLAIM:proof-Fano-1}]
        Let $\mathcal{B}$ denote the collection of all triples that have nonempty intersection with both $V_1$ and $V_2$. 
        Simple calculations shows that $|\mathcal{B}| \le n^3/8$. 
        Since $\mathcal{G} \subset \mathcal{B}$, it follows from~\eqref{THM:number-edges-H'} that 
        \begin{align*}
            |\mathcal{B} \setminus \mathcal{G}|
            \le \frac{n^3}{8} - \left(\frac{1}{8}-2\varepsilon\right)n^3
            = 2\varepsilon n^3. 
        \end{align*}
        Suppose to the contrary that Claim~\ref{CLAIM:proof-Fano-1} fails. Then we would have 
        \begin{align*}
            |\mathcal{B} \setminus \mathcal{G}|
            \ge \left(x_1 n - 2\right) \left(\binom{x_2 n}{2} - (1-110 \varepsilon) \binom{x_2 n}{2}\right)
            \ge \frac{n}{3} \cdot 110 \varepsilon \cdot \binom{n/3}{2}
            > 2\varepsilon n^3, 
        \end{align*}
        a contradiction. 
    \end{proof}
    Let $u_1, u_2, u_3 \in V_1$ be three vertices guaranteed by Claim~\ref{CLAIM:proof-Fano-1}, and let 
    \begin{align*}
        G 
        \coloneqq \binom{V_2}{2} \cap L_{\mathcal{G}}(u_1) \cap L_{\mathcal{G}}(u_2) \cap L_{\mathcal{G}}(u_3). 
    \end{align*}
    By definition, $|G| \ge \binom{x_2 n}{2} - 3 \cdot 110 \varepsilon \binom{x_2 n}{2} > \frac{(x_2 n)^2}{3}$. 
    So it follows Tur\'{a}n's theorem that there exist four vertices $\{u_4, u_5, u_6, u_7\} \subset V_2$ that induce a copy of $K_4$ in $G$. 
    However, this implies that the induced subgraph of $\mathcal{H}$ on $\{u_1, \ldots, u_7\}$ contains a copy of $\mathbb{F}_{-}$, a contradiction.   
\end{proof}

\section{Proof of Theorem~\ref{THM:stability}}\label{SEC:proof-stability}
In this section, we present the proof of Theorem~\ref{THM:stability}.
The following preliminary results will be useful.
\begin{fact}\label{FACT:Binomal-Inequ-1}
    Suppose that integer $n,t,r\ge 1$ satisfy $t \le \frac{n-r}{5r+1}$. 
    Then 
    \begin{align*}
        \binom{n-t}{r}
        \ge \frac{1}{e^{1/5}}\binom{n}{r}. 
    \end{align*}
\end{fact}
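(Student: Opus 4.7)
The plan is to bound the ratio $\binom{n-t}{r}/\binom{n}{r}$ directly and show it exceeds $e^{-1/5}$. Writing
\begin{align*}
    \frac{\binom{n-t}{r}}{\binom{n}{r}}
    = \prod_{i=0}^{r-1}\frac{n-t-i}{n-i}
    = \prod_{i=0}^{r-1}\left(1-\frac{t}{n-i}\right),
\end{align*}
I observe that each factor is smallest when $i = r-1$, so the product is at least $\bigl(1 - t/(n-r+1)\bigr)^{r}$.

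Next, I would take logarithms and apply the standard inequality $\ln(1-x) \ge -x/(1-x)$, valid for $0 \le x < 1$, to obtain
\begin{align*}
    r\ln\!\left(1-\frac{t}{n-r+1}\right)
    \ge -\frac{rt}{n-r+1-t}.
\end{align*}
The final step is to verify that the hypothesis $t \le (n-r)/(5r+1)$ forces the right-hand side to be at least $-1/5$. Rearranging $rt/(n-r+1-t) \le 1/5$ gives the equivalent condition $t(5r+1) \le n-r+1-0$, which is implied by $t(5r+1) \le n-r$. Exponentiating then yields $\binom{n-t}{r}/\binom{n}{r} \ge e^{-1/5}$, as required.

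There is no real obstacle here; the statement is an elementary inequality and the only choice is picking a lower bound for $\ln(1-x)$ sharp enough to turn the hypothesis $t \le (n-r)/(5r+1)$ into the clean constant $1/5$. The form $\ln(1-x) \ge -x/(1-x)$ is the natural choice because it makes the denominator $n-r+1-t$ cancel cleanly with the hypothesis on $t$; using the weaker $\ln(1-x) \ge -2x$ (valid for $x \le 1/2$) would produce a slightly worse constant and would not land on $1/5$.
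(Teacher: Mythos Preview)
Your proof is correct. Both you and the paper bound the same ratio, but you decompose it differently: the paper writes
\[
\frac{\binom{n}{r}}{\binom{n-t}{r}}=\prod_{i=0}^{t-1}\frac{n-i}{n-i-r}
\]
as a product of $t$ factors, bounds each by $1+\tfrac{r}{n-t-r}\le 1+\tfrac{1}{5t}$ via the hypothesis, and then applies $(1+\tfrac{1}{5t})^{t}\le e^{1/5}$ directly. You instead express the reciprocal ratio as a product of $r$ factors $1-\tfrac{t}{n-i}$, pass to logarithms, and use $\ln(1-x)\ge -x/(1-x)$. Both routes are equally elementary and of comparable length; the paper's avoids logarithms and the auxiliary inequality, while yours makes the role of $r$ (rather than $t$) in the exponent more explicit. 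Either way the hypothesis $t(5r+1)\le n-r$ is exactly what is needed, and your verification of that step is fine (note the ``$-0$'' in your displayed rearrangement is a typo but harmless).
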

\begin{proof}[Proof of Fact~\ref{FACT:Binomal-Inequ-1}]
    For every $i \in [t]$ it follows from $t \le \frac{n-r}{5r+1}$ that $\frac{n-i}{n-i-r} \le \frac{n-t}{n-t-r} \le 1+\frac{1}{5t}$. 
    Therefore, 
    \begin{align*}
        \binom{n}{r}
        = \prod_{i=0}^{t-1}\frac{n-i}{n-i-r} \binom{n-t}{r}
        \le \left(1+\frac{1}{5t}\right)^{t} \binom{n-t}{r} 
        \le e^{1/5} \binom{n-t}{r},  
    \end{align*}
    proving Fact~\ref{FACT:Binomal-Inequ-1}. 
\end{proof}
%
%
%
\begin{fact}[{\cite[Lemma~3.5]{HLLYZ23}}]\label{FACT:smooth-degree}
    Let $F$ be an $r$-graph. 
    For every $n$ and $t \le  n/r -1$ we have 
    \begin{align*}
        \left| d(n,F) - d(n-t, F) \right| \le 4t \binom{n-t}{r-2}.
    \end{align*}
\end{fact}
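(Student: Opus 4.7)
The plan is to bound $|d(n,F) - d(n-t,F)|$ via the classical monotonicity of $\pi_m \coloneqq \mathrm{ex}(m,F)/\binom{m}{r}$, together with elementary binomial manipulations. The argument splits naturally according to the sign of the difference $d(n,F) - d(n-t,F)$.

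First, I would recall the standard averaging argument: for an extremal $m$-vertex $F$-free hypergraph $\mathcal{H}$, summing the edge counts of all $(m-1)$-vertex induced subgraphs gives $(m-r)\mathrm{ex}(m,F) \le m \cdot \mathrm{ex}(m-1,F)$. Iterating this inequality from $m = n$ down to $m = n-t+1$ yields $\mathrm{ex}(n,F)/\binom{n}{r} \le \mathrm{ex}(n-t,F)/\binom{n-t}{r}$, which, after substituting $d(m,F) = r\,\mathrm{ex}(m,F)/m$, translates into
$$d(n,F) \le d(n-t,F) \cdot \prod_{i=1}^{r-1}\frac{n-i}{n-t-i}.$$

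To bound the upper direction $d(n,F) - d(n-t,F)$, I would write each factor as $1 + t/(n-t-i)$ and apply $\prod_i(1+x_i) \le \exp(\sum_i x_i)$ to obtain $\prod \le \exp\bigl((r-1)t/(n-t-r+1)\bigr)$. The hypothesis $t \le n/r - 1$ gives $n-t-r+1 \ge (r-1)t + 1$, so the exponent is at most $1$; combined with $e^y - 1 \le 2y$ on $[0,1]$, this yields $\prod - 1 \le 2(r-1)t/(n-t-r+1)$. Multiplying by the trivial bound $d(n-t,F) \le \binom{n-t-1}{r-1} = \binom{n-t-1}{r-2}(n-t-r+1)/(r-1)$, the factor $(n-t-r+1)/(r-1)$ cancels cleanly and I obtain $d(n,F) - d(n-t,F) \le 2t\binom{n-t-1}{r-2} \le 2t\binom{n-t}{r-2}$.

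For the opposite direction, the trivial monotonicity $\mathrm{ex}(n,F) \ge \mathrm{ex}(n-t,F)$ immediately gives $d(n,F) \ge \frac{n-t}{n}\,d(n-t,F)$, hence $d(n-t,F) - d(n,F) \le \frac{t}{n}\,d(n-t,F) \le \frac{t}{n}\binom{n-t-1}{r-1} \le t\binom{n-t}{r-2}$, using the same binomial identity to absorb the shift. Combining both directions gives $|d(n,F) - d(n-t,F)| \le 2t\binom{n-t}{r-2}$, comfortably within the claimed $4t\binom{n-t}{r-2}$. The only delicate step is the verification that $(r-1)t/(n-t-r+1) \le 1$ follows from $t \le n/r - 1$, which is a short algebraic check; once it is in place, the rest is bookkeeping with binomial coefficients, and there is no genuine obstacle.
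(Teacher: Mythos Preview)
Your argument is correct. The monotonicity of $\mathrm{ex}(m,F)/\binom{m}{r}$ together with the trivial bound $d(n-t,F)\le \binom{n-t-1}{r-1}$ handles the direction $d(n,F)\ge d(n-t,F)$, and the cancellation you describe is exactly right: $\binom{n-t-1}{r-1}\cdot \frac{2(r-1)t}{n-t-r+1}=2t\binom{n-t-1}{r-2}$. The algebraic check that $t\le n/r-1$ forces $(r-1)t\le n-t-r+1$ reduces to $rt\le n-r$, which is immediate; and your use of $e^y-1\le (e-1)y\le 2y$ on $[0,1]$ is valid by convexity. The reverse direction via $\mathrm{ex}(n,F)\ge \mathrm{ex}(n-t,F)$ also goes through, since $\frac{t}{n}\binom{n-t-1}{r-1}\le \frac{t}{r-1}\binom{n-t}{r-2}\le t\binom{n-t}{r-2}$. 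You in fact obtain the sharper constant $2$ in place of $4$.

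As for comparison with the paper: the present paper does not give its own proof of this fact but simply quotes it from~\cite[Lemma~3.5]{HLLYZ23}. Your route via the classical density monotonicity $\pi_m\coloneqq \mathrm{ex}(m,F)/\binom{m}{r}$ is the natural one and is almost certainly what underlies the cited lemma as well, so there is nothing to contrast.
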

\begin{lemma}\label{LEMMA:smooth-bound}
    Suppose that $F$ is a smooth $r$-graph. Then for sufficiently large $n$ and for every positive integer $t \le n$, 
    \begin{align*}
        \left|\mathrm{ex}(n,F) - \mathrm{ex}(n-t,F) - t \cdot d(n,F)\right|  
        \le \left(\frac{1-\pi(F)}{8m} t + \frac{4(r-1)t^2}{n}\right) \binom{n}{r-1}. 
    \end{align*}
\end{lemma}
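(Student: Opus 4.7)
The plan is to telescope. Writing
$$\mathrm{ex}(n,F) - \mathrm{ex}(n-t,F) = \sum_{i=0}^{t-1} \delta(n-i,F),$$
I would first apply the smoothness hypothesis~\eqref{equ:def-smooth} to each summand, yielding $\delta(n-i,F) = d(n-i-1,F) + \eta_i$ where $|\eta_i| \le \frac{1-\pi(F)}{8m}\binom{n-i}{r-1} \le \frac{1-\pi(F)}{8m}\binom{n}{r-1}$ (here $m = v(F)$). Summing these smoothness errors over $i = 0, \ldots, t-1$ contributes at most $\frac{1-\pi(F)}{8m}\,t\,\binom{n}{r-1}$, which is precisely the first term on the right-hand side of the claim.

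The second step is to compare $\sum_{i=0}^{t-1} d(n-i-1, F)$ with $t\cdot d(n,F)$. Applying Fact~\ref{FACT:smooth-degree} with parameter $i+1$ gives $|d(n,F) - d(n-i-1,F)| \le 4(i+1)\binom{n-i-1}{r-2} \le 4(i+1)\binom{n}{r-2}$. Summing over $i = 0, \ldots, t-1$ yields
$$\left|\sum_{i=0}^{t-1} d(n-i-1,F) - t\cdot d(n,F)\right| \le 4\binom{n}{r-2}\sum_{i=0}^{t-1}(i+1) = 2t(t+1)\binom{n}{r-2}.$$
Converting via the identity $\binom{n}{r-2} = \frac{r-1}{n-r+2}\binom{n}{r-1}$, and using that $n$ is sufficiently large (so that $n-r+2 \ge n/2$ and $t+1 \le 2t$ whenever $t\ge 1$), this is at most $\frac{4(r-1)t^2}{n}\binom{n}{r-1}$, giving the second term on the right-hand side.

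Combining the two estimates via the triangle inequality yields the lemma. The approach is entirely mechanical: the only two substantive inputs are the smoothness definition (which controls the one-step increment $\delta(k,F)$ by the average degree $d(k-1,F)$) and Fact~\ref{FACT:smooth-degree} (which controls how $d(\cdot,F)$ drifts over a window of length $t$). The only delicate point, and the one requiring care, is keeping the constants aligned so that the quadratic-in-$t$ drift term lands cleanly inside $\frac{4(r-1)t^2}{n}\binom{n}{r-1}$; this is just bookkeeping and needs $n$ to be large relative to $r$, which is exactly the hypothesis.
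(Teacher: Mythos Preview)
Your approach is exactly the paper's: telescope into one-step increments, apply smoothness~\eqref{equ:def-smooth} termwise for the first error, and apply Fact~\ref{FACT:smooth-degree} termwise for the drift of $d(\cdot,F)$. Two small bookkeeping points, however.

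First, your final arithmetic overshoots by a factor of $2$: with $t+1\le 2t$ and $n-r+2\ge n/2$ you get $2t(t+1)\cdot\frac{r-1}{n-r+2}\le \frac{8(r-1)t^2}{n}$, not $\frac{4(r-1)t^2}{n}$. The fix is the one the paper uses: bound $\binom{n-i-1}{r-2}\le\binom{n-1}{r-2}$ (valid since $i\ge 0$) rather than by $\binom{n}{r-2}$, and then use the exact identity $\binom{n-1}{r-2}=\frac{r-1}{n}\binom{n}{r-1}$, which lands on $4t^2\cdot\frac{r-1}{n}\binom{n}{r-1}$ with no slack.

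Second, the paper begins by observing that the case $t>n/2$ is trivial and restricts to $t\le n/2$. You should do the same (or something equivalent), because the smoothness bound~\eqref{equ:def-smooth} is only guaranteed for arguments at least $N_0$, and you are invoking it at $n-i$ for $i$ up to $t-1$; without first controlling $t$ this is not justified.
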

\begin{proof}[Proof of Lemma~\ref{LEMMA:smooth-bound}]
    We may assume that $t \le n/2$ since otherwise the inequality is trivial. 
    Let $\Delta \coloneqq \left|\mathrm{ex}(n,F) - \mathrm{ex}(n-t,F) - t \cdot d(n,F)\right|$. 
    It follows from Fact~\ref{FACT:smooth-degree} and the smoothness of $F$ that 
    \begin{align*}
        \Delta
        & \le \sum_{i=1}^{t} \left|\mathrm{ex}(n-i+1,F) - \mathrm{ex}(n-i,F) - d(n-i, F)\right|
        + \sum_{i=1}^{t} \left|d(n-i, F) - d(n,F)\right| \\
        & \le \sum_{i=1}^{t} \left|\delta(n-i+1,F) - d(n-i, F)\right| 
        + \sum_{i=1}^{t} 4i \binom{n-1}{r-2} \\
        & \le \sum_{i=1}^{t} \frac{1-\pi(F)}{8m} \binom{n}{r-1} + 4t^2 \binom{n-1}{r-2} 
        = \left(\frac{1-\pi(F)}{8m} t + \frac{4t^2(r-1)}{n}\right) \binom{n}{r-1},  
    \end{align*}
    proving Lemma~\ref{LEMMA:smooth-bound}. 
\end{proof}
Next, we present the proof of Theorem~\ref{THM:stability}. 
\begin{proof}[Proof of Theorem~\ref{THM:stability}]
    Fix integers $m \ge r \ge 2$ and an $m$-vertex $r$-graph $F$ that is smooth and $(c_1, c_2)$-bounded for constants $c_1, c_2$ satisfying $0 < c_1 \le \frac{1-\pi(F)}{12m}$ and $c_2 > 0$.
    Let  
    \begin{align*}
        \delta \coloneq  \min\left\{\frac{c_2}{4},\ \frac{1-\pi (F)}{24m}, \frac{1}{100 r m^2}\right\}.
    \end{align*}
    Let $n$ be a sufficiently large integer and $t$ be a positive integer satisfying $t < \frac{\delta \cdot \mathrm{ex}(n,F)}{2 m\binom{n-1}{r-1}}$. 
    Simple calculations show that 
    \begin{align*}
        t 
        \le \frac{\delta \cdot \mathrm{ex}(n,F)}{2 m\binom{n-1}{r-1}} - 1
        = \min\left\{\frac{\delta \cdot \mathrm{ex}(n,F)}{2 m\binom{n-1}{r-1}} -1,\  \frac{\delta (n-1)}{2m(r-1)}-1,\ \frac{n}{100 r m^2}\right\}. 
    \end{align*}
    Let $\mathcal{H}$ be an $n$-vertex $(t+2)F$-free $r$-graph with 
    \begin{align}\label{equ:stability-H-edge-lower-bound}
        |\mathcal{H}| 
        \ge  \binom{n}{r} - \binom{n-t}{r} + \mathrm{ex}(n-t, F) - \frac{1-\pi(F)}{30} \binom{n}{r-1}. 
    \end{align}
    %
    Let $V\coloneqq V(\mathcal{H})$, $\alpha \coloneqq \frac{1-\pi(F)}{7m} >\frac{1-\pi(F)}{12m} + \frac{1-\pi(F)}{24m} \ge c_1 + \delta$, 
    \begin{align*}
        L
        \coloneq  \left\{v\in V \colon d_{\mathcal{H}}(v) \ge d(n,F) + \alpha \binom{n-1}{r-1}\right\}, 
        \quad\text{and}\quad 
        S
        \coloneq  V\setminus L.
    \end{align*}
    Let $\ell \coloneqq L$ and suppose to the contrary that $t - \ell \ge 1$. 
    \begin{claim}\label{CLAIM:stability-HS}
        The $r$-graph $\mathcal{H}[S]$ is $(t+2-\ell)F$-free. 
    \end{claim}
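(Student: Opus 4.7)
The plan is to argue by contradiction, mirroring the extension strategy used in the proof of Claim~\ref{CLAIM:proof-reduction-find-tF}. Suppose that $\mathcal{H}[S]$ contains a copy $\mathcal{M}$ of $(t+2-\ell)F$. I will construct $\ell$ further pairwise vertex-disjoint copies $F_1,\ldots,F_\ell$ of $F$ in $\mathcal{H}$, each also vertex-disjoint from $V(\mathcal{M})$, so that $\mathcal{M}\sqcup F_1\sqcup\cdots\sqcup F_\ell$ forms a copy of $(t+2)F$ in $\mathcal{H}$, contradicting the $(t+2)F$-freeness of $\mathcal{H}$.

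Write $L=\{v_1,\ldots,v_\ell\}$. I will produce the copies $F_i$ inductively via Lemma~\ref{LEMMA:1st-interval-avoid-R}. At step $i$, set
\[
    R_i \coloneqq V(\mathcal{M})\cup\bigcup_{j<i}V(F_j)\cup(L\setminus\{v_i\})
\]
and apply the lemma to $\mathcal{H}$ with distinguished vertex $v=v_i$. Note that $v_i\notin R_i$ since $v_i\in L$ and each previously chosen $F_j$ was forced to avoid $L\setminus\{v_j\}\ni v_i$; consequently the resulting $F_i\subset\mathcal{H}$ is vertex-disjoint from $V(\mathcal{M})$ and from all earlier $V(F_j)$, as required.

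Three hypotheses of Lemma~\ref{LEMMA:1st-interval-avoid-R}, to be applied with $\delta_1=\delta_2=\delta$, must be verified uniformly in $i$. The degree hypothesis follows at once from $v_i\in L$ together with the fact (established immediately above this claim) that $\alpha\ge c_1+\delta$, giving $d_{\mathcal{H}}(v_i)\ge d(n,F)+(c_1+\delta)\binom{n-1}{r-1}$. For the size of $R_i$, the crude bound $|R_i|\le(t+2-\ell)m+(i-1)m+\ell-1\le 2(t+1)m$ combined with the hypothesis $t<\delta\cdot\mathrm{ex}(n,F)/(2m\binom{n-1}{r-1})$ gives $|R_i|\le\delta\cdot\mathrm{ex}(n,F)/\binom{n-1}{r-1}$.

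The main point---and the step I expect to require most care---is verifying the edge lower bound $|\mathcal{H}|\ge(1-c_2+\delta)\mathrm{ex}(n,F)$ from~\eqref{equ:stability-H-edge-lower-bound}. My plan is to invoke the smoothness-based Lemma~\ref{LEMMA:smooth-bound} to approximate $\mathrm{ex}(n-t,F)$ by $\mathrm{ex}(n,F)-t\cdot d(n,F)$ up to an additive error of order $\tfrac{1-\pi(F)}{8m}t\binom{n}{r-1}+\tfrac{(r-1)t^2}{n}\binom{n}{r-1}$, and to combine this with the telescoping identity $\binom{n}{r}-\binom{n-t}{r}=\sum_{i=0}^{t-1}\binom{n-1-i}{r-1}$ and the asymptotic $d(n,F)\le\pi(F)\binom{n-1}{r-1}+o\bigl(\binom{n-1}{r-1}\bigr)$ to deduce $\binom{n}{r}-\binom{n-t}{r}-t\cdot d(n,F)\ge 0$. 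This yields $|\mathcal{H}|\ge\mathrm{ex}(n,F)-\tfrac{1-\pi(F)}{30}\binom{n}{r-1}-O\bigl(\tfrac{t}{m}\binom{n}{r-1}\bigr)$, which, given the prescribed upper bound on $t$ and the positive constant $c_2$, exceeds $(1-c_2+\delta)\mathrm{ex}(n,F)$ for sufficiently large $n$. Once these three hypotheses hold at every step, the inductive construction of $F_1,\ldots,F_\ell$ goes through and produces the desired copy of $(t+2)F$ in $\mathcal{H}$, completing the contradiction.
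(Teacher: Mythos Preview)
Your approach is correct and matches the paper's proof: both argue by contradiction, fix a copy of $(t+2-\ell)F$ in $\mathcal{H}[S]$, and then apply Lemma~\ref{LEMMA:1st-interval-avoid-R} inductively at the vertices $v_1,\ldots,v_\ell$ of $L$ to extend it to a copy of $(t+2)F$. The one place where you work harder than necessary is in verifying the edge bound $|\mathcal{H}|\ge(1-c_2+\delta)\,\mathrm{ex}(n,F)$: the paper does not invoke smoothness or Lemma~\ref{LEMMA:smooth-bound} at all here, but simply uses the elementary inequality $\binom{n}{r}-\binom{n-t}{r}+\mathrm{ex}(n-t,F)\ge\mathrm{ex}(n,F)$ (an immediate consequence of $\delta(k,F)\le\binom{k-1}{r-1}$ for every $k$, obtained by deleting a minimum-degree vertex), which gives $|\mathcal{H}|\ge\mathrm{ex}(n,F)-\tfrac{1-\pi(F)}{30}\binom{n}{r-1}$ directly from~\eqref{equ:stability-H-edge-lower-bound}.
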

    \begin{proof}[Proof of Claim~\ref{CLAIM:stability-HS}]
        The proof is similar to that of Claim~\ref{CLAIM:proof-reduction-find-tF}. 
        Suppose to the contrary that $(t+2-\ell)F \subset \mathcal{H}[S]$. 
        Let $R_0 \subset S$ be a set of size $(t+2-\ell)m$ such that $(t+2-\ell)F \subset \mathcal{H}[R_0]$. 
        We will find $\ell$ vertex-disjoint copies of $F$ from $\mathcal{H}[V\setminus R_0]$ inductively. This will contradict the assumption that $\mathcal{H}$ is $(t+2)F$-free. 
        Assume that $L = \{v_1, \ldots, v_{\ell}\}$. 
        To start with, let $R_1 \coloneqq R_0 \cup \left(L \setminus \{v_1\}\right)$. 
        Notice that 
        \begin{align*}
            |R_1| 
            \le (t+2-\ell)m + t 
            \le (2t+2) m 
            \le \frac{\delta \cdot \mathrm{ex}(n,F)}{\binom{n-1}{r-1}}. 
        \end{align*}
        Additionally,~\eqref{equ:stability-H-edge-lower-bound} implies that $|\mathcal{H}| \ge \mathrm{ex}(n,F) - \frac{1-\pi(F)}{30} \binom{n}{r-1} \ge (1-c_2+\delta) \cdot \mathrm{ex}(n,F)$, and the definition of $L$ implies that $d_{\mathcal{H}}(v_1) \ge d(n,F) + \alpha \binom{n-1}{r-1} \ge d(n,F) + (c_1 + \delta) \binom{n-1}{r-1}$. 
        Thus, by Lemma~\ref{LEMMA:1st-interval-avoid-R}, there exists a copy of $F$, denoted by $F_1$, in $\mathcal{H}$ that avoids $R_1$. 
        Suppose that we have found $i$ vertex-disjoint copies of $F$, namely $F_1, \ldots, F_i$, from $\mathcal{H}$ for some $i\le \ell-1$. 
        Let $R_{i+1} \coloneqq \bigcup_{1\le j \le i}V(F_j) \cup \left(L \setminus \{v_{i+1}\}\right) \cup R_0$. 
        Since 
        \begin{align*}
            |R_{i+1}| 
            \le i m + t + (t+2-\ell)m
            \le (2t+2) m 
            \le \frac{\delta \cdot \mathrm{ex}(n,F)}{\binom{n-1}{r-1}}, 
        \end{align*}
        similar to the argument above,  by Lemma~\ref{LEMMA:1st-interval-avoid-R}, there exists a copy of $F$, denoted by $F_{i+1}$, in $\mathcal{H}$ that avoids $R_{i+1}$. 
        The choice of $t$ ensures that we can repeat this process $\ell$ times, hence obtaining $\ell F \subset \mathcal{H}[V\setminus R_0]$. 
    \end{proof}
    Let $\{F_1, \ldots, F_{\hat{t}}\}$ be a maximum collection of vertex-disjoint copies of $F$ in $\mathcal{H}[S]$ and let $B \coloneqq \bigcup_{i\in [\hat{t}]}V(F_i)$. 
    Notice that $\mathcal{H}[S\setminus B]$ is $F$-free. 
    By Claim~\ref{CLAIM:stability-HS}, we know that $|B| = m \hat{t} \le m (t+1-\ell)$. 
    So it follows from the definition of $L$ that 
    \begin{align*}
        |\mathcal{H}[S]|
        & \le \sum_{v\in B}d_{\mathcal{H}}(v) + |\mathcal{H}[S\setminus B]| \\
        & \le m (t+1-\ell) \left(d(n,F) + \alpha \binom{n-1}{r-1}\right) + \mathrm{ex}\left(n-t-m (t+1-\ell), F\right). 
    \end{align*}
    On the other hand, since there are at most $\binom{n}{r} - \binom{n-\ell}{r}$ edges in $\mathcal{H}$ that have nonempty intersection with $L$, it follows from~\eqref{equ:stability-H-edge-lower-bound} that 
    \begin{align*}
        |\mathcal{H}[S]| 
        & \ge |\mathcal{H}| - \left(\binom{n}{r} - \binom{n-\ell}{r}\right) \\
        & = \binom{n-\ell}{r} - \binom{n-t}{r} + \mathrm{ex}(n-t,F) - \frac{1-\pi(F)}{30} \binom{n-1}{r-1}. 
    \end{align*}
    Recall that $\alpha = \frac{1-\pi(F)}{7m}$. 
    Therefore, to establish a contradiction, it suffices to show that 
    \begin{align*}
        \Delta 
        & \coloneqq \left(\binom{n-\ell}{r} - \binom{n-t}{r} + \mathrm{ex}(n-t,F) - \frac{1-\pi(F)}{30} \binom{n-1}{r-1}\right) \\
        & \quad - \left(m (t+1-\ell) \left(d(n,F) + \frac{1-\pi(F)}{7m} \binom{n-1}{r-1}\right) + \mathrm{ex}\left(n-t-m (t+1-\ell), F\right) \right)
    \end{align*}
    is positive. 
    Notice that $\Delta = \Delta_1 + \Delta_2$, where 
    \begin{align*}
        \Delta_1
        & \coloneqq \mathrm{ex}(n-t,F) - \mathrm{ex}\left(n-t -m (t+1-\ell), F\right) - m (t+1-\ell) \cdot d(n,F), \quad\text{and}\quad\\
        \Delta_2 
        & \coloneqq \binom{n-\ell}{r} - \binom{n-t}{r} - \frac{\left(1-\pi(F)\right)(t+1-\ell)}{7} \binom{n-1}{r-1} - \frac{1-\pi(F)}{30} \binom{n-1}{r-1}.   
    \end{align*}
    First, let us consider $\Delta_2$. 
    Since $\binom{n-\ell}{r} - \binom{n-t}{r} \ge (r-\ell) \binom{n-t}{r-1}$ and $t \le \frac{n}{100 r m^2}\le \frac{n-(r-1)}{5(r-1)+1}$, it follows from Fact~\ref{FACT:Binomal-Inequ-1} that 
    \begin{align}\label{equ:stability-Delta2}
        \Delta_2
        & \ge (t-\ell)\binom{n-t}{r-1} - \frac{t+1-\ell}{7} \binom{n}{r-1} - \frac{1}{30} \binom{n}{r-1}  \notag \\
        & \ge \frac{t-\ell}{e^{1/5}} \binom{n}{r-1} - \frac{t+1-\ell}{7} \binom{n}{r-1} - \frac{1}{30} \binom{n}{r-1}.  
    \end{align}
    Next, we consider $\Delta_1$. 
    If $F$ is degenerate, then, by the theorem of Erd\H{o}s~\cite{Erdos64} (see also~{\cite[Theorem~A.1]{HHLLYZ23a}} for a more precise form), $d(n,F) = r\cdot \mathrm{ex}(n,F)/n \le m \cdot n^{r-1-c_{F}}$ for some constant $c_F > 0$. Combining with the assumption that $n$ is sufficiently large, we obtain 
    \begin{align*}
        \Delta_1 
        & = \mathrm{ex}(n-t,F) - \mathrm{ex}\left(n- t- m (t+1-\ell), F\right) - m (t+1-\ell) \cdot d(n,F) \\
        & \ge - m (t+1-\ell) \cdot d(n,F) 
        \ge - (t+1-\ell) m^2 \cdot n^{r-1-c_{F}}
        \ge - \frac{29(t+1-\ell)}{200} \binom{n}{r-1}. 
    \end{align*}
    If $F$ is nondegenerate, then we decompose $\Delta_1$ as $\Delta_1 = \Delta_{1,1} + \Delta_{1,2}$, 
    where 
    \begin{align*}
        \Delta_{1,1}
        & \coloneqq \mathrm{ex}(n-t,F) - \mathrm{ex}\left(n-t -m (t+1-\ell), F\right) - m (t+1-\ell) \cdot d(n-t,F), 
        \quad\text{and}\quad \\
        \Delta_{1,2}
        & \coloneqq  m (t+1-\ell) \cdot \left(d(n-t,F) - d(n,F) \right). 
    \end{align*}
    Applying Lemma~\ref{LEMMA:smooth-bound} to $\Delta_{1,1}$, we obtain  
    \begin{align*}
        \Delta_{1,1} 
        & \ge -  \left(\frac{\left(1-\pi(F)\right)m(t+1-\ell)}{8m} + \frac{4(r-1)m^2(t+1-\ell)^2}{n}\right) \binom{n}{r-1}  \\
        & \ge - \left(\frac{t+1-\ell}{8} + \frac{4(r-1)m^2(t+1-\ell)^2}{n}\right) \binom{n}{r-1}  \\
        & \ge - \left(\frac{t+1-\ell}{8} + \frac{t+1-\ell}{20}\right) \binom{n}{r-1}, 
    \end{align*}
    where the last inequality follows from the assumption that $t \le \frac{n}{100 r m^2}$. 
    Applying Fact~\ref{FACT:smooth-degree} to $\Delta_{1,2}$, we obtain  
    \begin{align*}
        \Delta_{1,2}
        & \ge - 4m(t+1-\ell) t \binom{n-1}{r-2} \\
        & = - (t+1-\ell) \frac{4 m t (r-1)}{n} \binom{n}{r-1} 
         \ge - \frac{t+1-\ell}{20} \binom{n}{r-1}. 
    \end{align*}
    Summing up $\Delta_{1,2}$ and $\Delta_{1,2}$, we obtain 
    \begin{align*}
        \Delta_{1}
        \ge - \frac{9(t+1-\ell)}{40} \binom{n}{r-1}. 
    \end{align*}
    In both cases, we have $\Delta_1 \ge - \frac{9(t+1-\ell)}{40} \binom{n}{r-1}$. Combining with~\eqref{equ:stability-Delta2}, we obtain 
    \begin{align*}
        \Delta 
        & = \Delta_1 + \Delta_2  \\
        & \ge \frac{t-\ell}{e^{1/5}} \binom{n}{r-1} - \frac{t+1-\ell}{7} \binom{n}{r-1} - \frac{1}{30} \binom{n}{r-1} - \frac{9(t+1-\ell)}{40} \binom{n}{r-1}\\
        & \ge \left(\frac{1}{e^{1/5}} -  \frac{2}{7} - \frac{1}{30}  - \frac{9}{20} \right) \binom{n}{r-1}
        > \frac{1}{25} \binom{n}{r-1}.  
    \end{align*}
    Here we used the assumption that $t - \ell \ge 1$. 
\end{proof}

\section{Concluding remarks}
Recall the definition of edge-sensitive from Section~\ref{SEC:Intro-Turan-Gap}. 
Theorem~\ref{THM:main-antiRmasey-Turan-Gap} motivates the following two questions on Tur\'{a}n problems. 
\begin{problem}
    Let $r \ge 2$ be an integer. 
    Characterize the family of edge-sensitive $r$-partite $r$-graphs. 
    In particular, is $K_{t,t}$ edge-sensitive for $t \ge 3$? 
\end{problem}

\begin{problem}
    Let $r \ge 3$ be an integer. 
    Characterize the family of $r$-graphs $F$ such that $\pi(F) > \pi(F_{-})$. 
    In particular, is it true that $\pi(K_{\ell}^{r}) > \pi(K_{\ell}^{r-})$ for all $\ell > r \ge 3$? 
\end{problem}
\section*{Acknowledgement}
XL would like to thank Jie Ma and Tianchi Yang for discussions related to Definition~\ref{DDF:turan-edge-critical}. 
\bibliographystyle{alpha}
\bibliography{antiRamsey}
\begin{appendix}
\section*{Definitions for hypergraphs in Tables~\ref{tab:turan-edge-critical.} and~\ref{tab:turan-edge-critical-2.}}
\begin{itemize}
    \item A graph $F$ is \textbf{edge-critical} if there exists an edge $e\in F$ such that $\chi(F-e) < \chi(F)$. 
    \item Fix a graph $F$,  
        the \textbf{expansion} $H_{F}^{r}$ of $F$ is the $r$-graphs obtained from $F$ by adding a set of $r-2$ new vertices into each edge of $F$, and moreover, these new $(r-2)$-sets are pairwise disjoint. 
    \item Given an $r$-graph $F$ with $\ell+1$ vertices, 
        the \textbf{expansion} $H^{F}_{\ell+1}$ of $F$ is the $r$-graph obtained from $F$ by adding, for every pair $\{u,v\}\subset V(F)$ that is not contained in any edge of $F$, an $(r-2)$-set of new vertices, and moreover, these $(r-2)$-sets are pairwise disjoint. 
    \item We say a tree $T$ is an \textbf{Erd\H{o}s--S\'{o}s tree} if it satisfies the famous Erd\H{o}s--S\'{o}s conjecture on trees. 
    The \textbf{$(r-2)$-extension} $\mathrm{Ext}(T)$ of a tree $T$ is  
    \begin{align*}
        \mathrm{Ext}(T) \coloneqq \left\{e\cup A \colon e \in T\right\}, 
    \end{align*}
    where $A$ is a set of $r-2$ new vertices that is disjoint from $V(T)$.   
    An $r$-graph $F$ is an \textbf{extended tree} if $F = \mathrm{Ext}(T)$ for some tree. 
    \item The ($r$-uniform) \textbf{generalized triangle} $\mathbb{T}_r$ is the $r$-graph with vertex set $[2r-1]$ and edge set  
    \begin{align*}
        \left\{\{1,\ldots,r-1, r\}, \{1,\ldots, r-1, r+1\}, \{r,r+1, \ldots, 2r-1\}\right\}. 
    \end{align*}
    \item Let $\mathcal{C}^{2r}_{3}$ (the expanded triangle) denote the $2r$-graph with vertex set $[3r]$ and edge set 
    \begin{align*}
        \left\{\{1,\ldots, r, r+1, \ldots, 2r\}, \{r+1, \ldots, 2r, 2r+1, \ldots, 3r\}, \{1,\ldots, r, 2r+1, \ldots, 3r\}\right\}. 
    \end{align*}
     \item The \textbf{Fano plane} $\mathbb{F}$ is the $3$-graph with vertex set $\{1,2,3,4,5,6,7\}$ and edge set
    \begin{align*}
        \{123,345,561,174,275,376,246\}. 
    \end{align*}
    \item Let $\mathrm{F}_7$ ($4$-book with $3$-pages) denote the $3$-graph with vertex set $\{1,2,3,4,5,6,7\}$ and edge set 
    \begin{align*}
        \left\{1234, 1235, 1236, 4567\right\}. 
    \end{align*}
    \item Let $\mathbb{F}_{4,3}$ denote the $4$-graph with vertex set $\{1,2,3,4,5,6,7\}$ and edge set
    \begin{align*}
        \left\{1234, 1235, 1236, 1237, 4567\right\}. 
    \end{align*}
    \item Let $\mathbb{F}_{3,2}$ denote the $3$-graph with vertex set $\{1,2,3,4,5\}$ and edge set
    \begin{align*}
        \{123,124,125,345\}. 
    \end{align*}
    \item The $3$-graph $K_{4}^{3}\sqcup K_{3}^{3}$ has vertex set $\{1,2,3,4,5,6,7\}$ and edge set 
    \begin{align*}
        \{123,124,234,567\}. 
    \end{align*}
    \item The $r$-graph $M_{k}^{r}$ ($r$-uniform $k$-matching) is the $r$-graph consisting of $k$ pairwise disjoint edges. 
    \item The $r$-graph $L_{k}^{r}$ ($r$-uniform $k$-sunflower) is the $r$-graph consisting of $k$ edges $e_1, \ldots, e_k$ such that for all $1 \leq i < j \leq k$, it holds that $e_i \cap e_j = \{v\}$ for some fixed vertex $v$.
    \item The $r$-graph $K_\ell^{r-}$ is the $r$-graph obtained from $K_{\ell}^{r}$ by removing one edge.
    \item The tight cycle $C_k^3$ is the 3-graph with vertex set $\{0,1,\dots,k-1\}$ and edge set 
    \begin{align*}
       \{\{i,i+1,i+2\}\Mod{k} \colon 0\le i\le k-1\}.
    \end{align*}
    \item The $r$-graph $C_k^{3-}$ is obtained from $C_k^3$ by removing one edge.
\end{itemize}
\end{appendix}
\end{document}